\newtheorem{Definition}{Definition}
\newtheorem{Theorem}{Theorem}
\newtheorem{Lemma}{Lemma}
\newcommand{\m}[1]{\mathbf{#1}}
\newcommand{\mc}[1]{\mathcal{#1}}
\newcommand{\mb}[1]{\mathbb{#1}}
\begin{document}

\title{\LARGE \bf Matrix-Scaled Consensus}
\author{Minh Hoang Trinh\authorrefmark{1}, Dung Van Vu\authorrefmark{2}, Quoc Van Tran\authorrefmark{3}, Hyo-Sung Ahn\authorrefmark{4},
\thanks{${}^{*}$Department of Automation Engineering, School of Electrical and Electronic Engineering, Hanoi University of Science and Technology (HUST), Hanoi 11615, Vietnam. E-mail: \tt\footnotesize{minh.trinhhoang@hust.edu.vn}}
\thanks{${}^{\dagger}$Unmanned Aerial Vehicle Center, Viettel High Technology Industries Corporation, Hanoi 11209, Vietnam. E-mail:
\tt\footnotesize{vuvandung.bkhn@gmail.com}}
\thanks{${}^{\ddagger}$Department of Mechatronics, School of Mechanical Engineering, Hanoi University of Science and Technology (HUST), Hanoi 11615, Vietnam. E-mail: \tt\footnotesize{tvquoc9790@gmail.com,~quoc.tranvan@hust.edu.vn}}
\thanks{${}^{\S}$School of Mechanical Engineering, Gwangju Institute of Science and Technology (GIST), Gwangju 61005, Republic of Korea. E-mail: \tt\footnotesize{hyosung@gist.ac.kr}}
}

\maketitle
\thispagestyle{empty}
\begin{abstract}
This paper proposes \emph{matrix-scaled consensus} algorithm, which generalizes the scaled consensus algorithm in \cite{Roy2015scaled}. In (scalar) scaled consensus algorithms, the agents' states do not converge to a common value, but to different points along a straight line in the state space, which depends on the scaling factors and the initial states of the agents. In the matrix-scaled consensus algorithm, a positive/negative definite matrix weight is assigned to each agent. Each agent updates its state based on the product of the sum of relative matrix scaled states and the sign of the matrix weight. Under the proposed algorithm, each agent asymptotically converges to a final point differing with a common consensus point by the inverse of its own scaling matrix. Thus, the final states of the agents are not restricted to a straight line but are extended to an open subspace of the state-space. Convergence analysis of matrix-scaled consensus for single and double-integrator agents are studied in detail. Simulation results are given to support the analysis.
\end{abstract}
%
%\begin{IEEEkeywords}
%consensus, matrix-scaled consensus, cooperative control, graph theory
%\end{IEEEkeywords}

\section{Introduction}
Consensus algorithm and its variations \cite{Ren2007magazine,Freeman2006stability,Sarlette2009,Altafini2013,Roy2015scaled,Trinh2018matrix} have been the main model for studying networked systems. Though simple, consensus algorithms can describe intricate phenomena such as bird flocking, synchronization behaviors, or how a group of people eventually reaches an agreement after discussions \cite{Jadbabaie2003coordination,Li2009consensus,Proskurnikov2017tutorial,Ye2020Aut}. The consensus algorithm is also used to coordinate large-scale systems such as formation of vehicles, electrical, sensor, and traffic networks \cite{Olfati2007consensuspieee}.

Consider a network in which the interactions between subsystems, or agents, is modeled by a graph. In the consensus algorithm, each agent updates its state based on the sum of the relative states with its nearby agents. If the interaction graph is connected, the agents' states asymptotically converge to a common point in the space, and we say that the system asymptotically reaches a consensus. 

The author in \cite{Roy2015scaled} proposed a scaled consensus model, in which each agent has a scaling gain $s_i$ and updates its state variable $x_i$ based on the consensus law\footnote{Notations will be defined in detail in Section~\ref{sec:2}.}
\begin{align} \label{eq:ScaledConsensus}
\dot{x}_i = \text{sign}(s_i) \sum_{j \in \mc{N}_i } (s_j x_j - s_i x_i), \quad ~i=1,\ldots, n.
\end{align}
The system \eqref{eq:ScaledConsensus} achieves a scaled-consensus globally asymptotically, that is, $s_i x_i(t) \to s_j x_j(t)$, as $t\to \infty$ and agents with the same $s_i$ will converge to the same point (or cluster). The system  \eqref{eq:ScaledConsensus} can describe a cooperative network, where agents have different levels of consensus on a single topic. Further extensions of the scaled consensus algorithm  with consideration to switching graphs, time delays, disturbance attenuation, or different agents' models can be found in the literature, for examples, see  \cite{Meng2015scaled,Meng2015TIE,Aghbolagh2016scaled,Shang2017delayed,Hanada2019new,Wu2021adaptive}.

This paper generalizes the consensus model \eqref{eq:ScaledConsensus} by assuming that each agent has a state vector and a positive or negative definite scaling matrix. The proposed model has some interesting features. First, thanks to the matrix weights, the system still achieves clustering behavior, but the final states are not restricted to be distributed along a straight line. Under the matrix-scaled consensus algorithm, a virtual consensus point is jointly determined by the initial states and the scaling matrices of all agents. The state vector of each agent converges to a point differently from the virtual consensus point by the inverse of its scaling matrix. As a result, clustering behaviors usually happen, and agents with the same scaling matrix converge to a common cluster in the space. Second, although the proposed consensus law has similarities with the biased consensus \cite{Ahn2019consensus} and orientation estimation algorithms \cite{lee2016distributed,Tran2018ecc}, in the proposed model, the scaling matrices are not limited to the rotation matrices. Under the assumption that the interaction graph is undirected and connected, it is shown that if the scaling matrices are positive definite or negative definite (possibly asymmetric), then the system achieves a matrix-scaled consensus globally asymptotically. Finally, the proposed algorithm can be used as a  multi-dimensional model for studying clustering behaviors in a social network. Unlike the matrix-weighted consensus algorithm \cite{Trinh2018matrix}, in which a positive definite/semidefinite matrix weight associated with an edge in the graph characterizes the degree of cooperation between the agents in the network, in the matrix-scaled consensus algorithm, a positive/negative definite matrix weight corresponds to a vertex in the graph and represents a local coordinate system of each agent. Each local coordinate system can be interpreted as the private belief system of an individual on $d$ logically dependent topics, and this belief system is usually not perfectly aligned with a social norm (a global coordinate system). If each individual updates his/her opinion based on his/her own belief system, due to the existence of negative definite scaling matrices, the system will be unstable. By self-realizing the negative/positive of his/her own belief's system relative to a social norm, each individual adjusts the opinion along a direction that is not contrary to the social norm. The sign of the scaling matrix in the proposed algorithm, thus, realizes the readiness of each individual to compromise in order to prevent the society from divergence. Moreover, we propose a matrix-scaled consensus algorithm  for a network of double integrator agents. A corresponding convergence condition related to the damping gain and the eigenvalues of the scaled Laplacian is also given. 

The remainder of this paper is organized as follows. Section~\ref{sec:2} provides notations and the theoretical framework that will be used throughout the paper. The matrix-scaled consensus algorithms for single and double integrator agents are proposed and examined in Sections~\ref{sec:3} and \ref{sec:4}, respectively. Simulation results are given in Section~\ref{sec:5} and Section~\ref{sec:6} concludes the paper. 
\section{Preliminaries}
\label{sec:2}
\subsection{Notations}
The sets of real and complex numbers are denoted by $\mb{R}$ and $\mb{C}$, respectively. Scalars are denoted by lowercase letters, while bold font normal and capital letters are used for vectors and matrices, respectively. 
Let $\m{A} \in \mb{R}^{m\times n}$, its transpose is given by $\m{A}^\top$. The kernel, image, rank, and determinant of $\m{A}$ are denoted by ker$(\m{A})$, im$(\m{A})$, rank($\m{A}$), and $\text{det}(\m{A})$, respectively. For a vector $\m{x} = [x_1, \ldots, x_d]^\top$, its 2-norm is $\|\m{x}\| = \sqrt{\sum_{i=1}^{d} x_i^2}$. Let $\m{x}_1, \ldots, \m{x}_n \in \mb{R}^d$, the vectorization operator is defined as vec$(\m{x}_1,\ldots,\m{x}_n) = [\m{x}_1^\top,\ldots,\m{x}_n^\top]^\top \in \mb{R}^{dn}$. A matrix $\m{A} \in \mb{R}^{d\times d}$ is positive definite (negative definite) if and only if $\forall \m{x} \in \mb{R}^d$, $\m{x}\neq \m{0}_d$, then $\m{x}^\top \m{A} \m{x}>0$ (resp., $\m{x}^\top \m{A} \m{x} < 0$). 
\subsection{Useful lemma}
\begin{Lemma} \cite{Li2009consensus} \label{lem:complex_quadratic_equation} The complex-coefficient polynomial $p(s) = s^2 + (a + bj) s + c + jd$, where $j^2 = -1$, is Hurwitz if and only if $abd + a^2c-d^2>0$.
\end{Lemma}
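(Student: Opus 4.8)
The plan is to reduce the complex quadratic to an associated \emph{real} quartic and then apply the classical Routh--Hurwitz test. Let $p^{*}(s) = s^2 + (a - bj)s + (c - jd)$ be the polynomial obtained by conjugating the coefficients of $p$; its roots are exactly the complex conjugates of the roots of $p$. Hence $q(s) := p(s)\,p^{*}(s)$ has a conjugation-symmetric root set and is therefore a real polynomial of degree four. Since complex conjugation maps the open left half-plane onto itself, $p$ is Hurwitz if and only if $q$ is Hurwitz, so it suffices to analyze $q$.

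Next I would expand the product to obtain
\[ q(s) = s^4 + 2a\,s^3 + (a^2 + b^2 + 2c)\,s^2 + 2(ac + bd)\,s + (c^2 + d^2), \]
and invoke the Routh--Hurwitz criterion for a monic real quartic $s^4 + \alpha_3 s^3 + \alpha_2 s^2 + \alpha_1 s + \alpha_0$, namely $\alpha_3 > 0$, $\alpha_0 > 0$, $\alpha_3\alpha_2 - \alpha_1 > 0$, and $\alpha_1(\alpha_3\alpha_2 - \alpha_1) - \alpha_3^2\alpha_0 > 0$. Substituting $\alpha_3 = 2a$, $\alpha_2 = a^2 + b^2 + 2c$, $\alpha_1 = 2(ac + bd)$, $\alpha_0 = c^2 + d^2$, the first condition becomes $a > 0$, and the decisive fourth one, after the cross terms cancel, factors as $(a^2 + b^2)\,(a^2 c + abd - d^2) > 0$.

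It then remains to check that, with $a > 0$, the single inequality $a^2 c + abd - d^2 > 0$ is equivalent to the full list of Routh conditions. Indeed it forces $a^2 + b^2 > 0$, so the factored condition collapses to $a^2 c + abd - d^2 > 0$ itself; it excludes $c = d = 0$, giving $\alpha_0 > 0$; and, writing $a(a^3 + ab^2 + ac - bd) = a^4 + a^2 b^2 + a^2 c - abd$ and using $a^2 c > d^2 - abd$, one obtains $a(a^3 + ab^2 + ac - bd) > a^4 + (ab - d)^2 > 0$, whence $\alpha_3\alpha_2 - \alpha_1 > 0$; the remaining sign $\alpha_1 > 0$ is then automatic from the fourth inequality. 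Conversely any Hurwitz $q$ satisfies $\alpha_3 > 0$ and the fourth inequality, i.e.\ $a > 0$ together with $a^2 c + abd - d^2 > 0$. This gives the claimed equivalence, with the positivity $a > 0$ (the ``trace'' part of the requirement) understood alongside the displayed inequality.

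I expect the only genuine work to be in the last two steps — in particular recognizing that the quartic Routh determinant factors cleanly as $(a^2 + b^2)(a^2 c + abd - d^2)$, which is exactly what makes the test collapse to a single scalar inequality; everything else is routine algebra. A second, equally workable route bypasses the quartic: putting $s = j\omega$ shows that $p$ has a root on the imaginary axis precisely when $a^2 c + abd - d^2 = 0$ (for $a \ne 0$), after which a connectedness/homotopy argument on the coefficient region $\{\,a > 0,\ a^2 c + abd - d^2 > 0\,\}$, together with one explicit representative such as $s^2 + s + 1$, closes the argument.
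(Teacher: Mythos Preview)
Your argument is correct, and in fact the paper offers nothing to compare it to: Lemma~\ref{lem:complex_quadratic_equation} is simply quoted from \cite{Li2009consensus} without proof and then invoked in Section~\ref{subsection:4b}. The reduction to the real quartic $q(s)=p(s)\,p^{*}(s)$ followed by the Routh--Hurwitz test is the standard route, and your key observation that the third Hurwitz determinant factors as $4(a^{2}+b^{2})(a^{2}c+abd-d^{2})$ is exactly what makes the criterion collapse to a single scalar inequality; the verification that this inequality together with $a>0$ forces the remaining Routh conditions is clean.

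You are also right to flag that the statement, as printed, tacitly assumes $a>0$: the polynomial $s^{2}-s+1$ (i.e.\ $a=-1$, $b=d=0$, $c=1$) satisfies $a^{2}c+abd-d^{2}=1>0$ yet has both roots in the open right half-plane. In the paper's only application the linear coefficient $a+bj$ equals the positive real damping gain $\alpha$, so $a=\alpha>0$ and $b=0$ hold automatically and the omission is harmless there.
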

\subsection{Algebraic graph theory}
An undirected graph is given by $\mc{G}=(\mc{V},\mc{E})$, where $\mc{V}=\{1, \ldots, n\}$ is the vertex set and $\mc{E} \subset \mc{V}^2$ is the set of $|\mc{E}| = m$ edges. If there is an edge $(i,j) \in\mc{E}$ connecting vertices $i, j \in \mc{V}$, $i\neq j$, then $i$ and $j$ are adjacent to each other. The neighbor set of a vertex $i$, denoted by $\mc{N}_i = \{j \in V|~(i,j) \in \mc{E} \}$, contains all adjacent vertices of $i$. A path in $\mc{G}$ is a sequence of edges connecting adjacent vertices in the graph. A graph is \emph{connected} if and only if there is a path between any pair of vertices in $\mc{V}$. Let the edges be indexed as $\mc{E} = \{e_1,\ldots, e_m\}$, and oriented such that for each edge $e_k = (i,j)$, $i$ is the starting vertex and $j$ is the end vertex of $e_k$. The \emph{incidence matrix} $\m{H}=[h_{kl}] \in \mb{R}^{m\times n}$ has $h_{kl} = -1$ if $l = i$, $h_{kl} = 1$ if $l=j$, and $h_{kl} = 0$, otherwise. The \emph{Laplacian matrix} $\m{L} = [l_{ij}] \in \mb{R}^{n \times n}$ of $\mc{G}$ is defined as follows:
\[{l_{ij}} = \left\{ {\begin{array}{*{20}{cl}}
{ - 1,}&{i \ne j,~(i,j)\in \mc{E},}\\
{~0,}&{i \ne j,~(i,j)\notin \mc{E},}\\
{ - \sum\nolimits_{i = 1,i \ne j}^n {{l_{ij}},} }&{i = j.}
\end{array}} \right.\]
As $\mc{G}$ is undirected, $\m{L}$ is symmetric positive semidefinite with eigenvalues given by $0 = \lambda_1 \le \lambda_2 \le \ldots \le \lambda_n$. We can write $\m{L} = \m{H}^\top\m{H}$. For a connected graph, $\lambda_2 > 0$ and $\text{ker}(\m{L})=\text{ker}(\m{H})=\text{im}(\m{1}_n)$.

\subsection{Matrix-scaled consensus}
Consider a multi-agent system consisting of $n$ agents. Each agent $i\in \{1, \ldots, n\}$ has a state vector $\m{x}_i \in \mb{R}^{d} $ $(d \geq 2)$ and a scaling matrix $\m{S}_i \in \mb{R}^{d\times d}$, which is either positive definite or negative definite.\footnote{Note that $\m{S}_i$ is not required to be symmetric.} Define the signum function 
\[\text{sign}(\m{S}_i) = \left\{ {\begin{array}{*{20}{c}}
~1, & \m{S}_i \text{ is positive definite,}\\
{ - 1,}& \m{S}_i \text{ is negative definite.}
\end{array}} \right.\]
Then $|\m{S}_i| \triangleq \text{sign}(\m{S}_i) \m{S}_i$ is a positve definite matrix. It is worth noting that $\text{sign}(\m{S}_i) = \text{sign}(\m{S}_i^\top) = \text{sign}(\m{S}_i^{-1})$ and $|\m{S}_i^{-1}|=|\m{S}_i|^{-1}$. 
Let $\m{x} = \text{vec}(\m{x}_1,\ldots, \m{x}_n)$, the following definition will be used in this paper:
\begin{Definition} The $n$-agent system achieves a \emph{matrix scaled consensus (MSC)} in a state $\m{x}$ if and only if $\m{x}\in \mc{A}$, where
\begin{align} \label{eq:MSC_definition}
\mc{A} = \{\m{x}\in \mb{R}^{dn}|~\m{S}_1 \m{x}_1 = \m{S}_2\m{x}_2 = \ldots = \m{S}_n \m{x}_n = \m{x}^a\},
\end{align}
and $\m{x}^a \in \mb{R}^d$ is called the virtual consensus point of the system.
\end{Definition}

Equivalently, the $n$-agent system achieves a matrix-scaled consensus  if and only if 
\begin{align}
\m{x}_i = \m{S}_i^{-1} \m{x}^a = \m{S}_i^{-1} \m{S}_j \m{x}_j,~\forall i,j \in \mc{V}.
\end{align}

\section{Matrix-scaled consensus of single-integrator agents}
\label{sec:3}
\subsection{The proposed consensus law}
Consider a system of $n$ single-integrator modeled agents in $\mb{R}^{d} $ $(d \geq 2)$: 
\[\dot{\m{x}}_i = \m{u}_i,~i = 1, \ldots, n,\]
where $\m{x}_i, \m{u}_i \in \mb{R}^d$ are respectively the state variable and the input of agent $i$. The matrix-scaled consensus is proposed as follows:
\begin{align} 
\m{u}_i &= \text{sign}(\m{S}_i) \sum_{j\in \mc{N}_i} (\m{S}_j \m{x}_j - \m{S}_i \m{x}_i), \nonumber\\
&= |\m{S}_i| \sum_{j\in \mc{N}_i} (\m{S}_i^{-1}\m{S}_j \m{x}_j - \m{x}_i),~i = 1, \ldots, n. \label{eq:MSC_agent_i}
\end{align}
In the consensus algorithm \eqref{eq:MSC_agent_i}, each agent $i$  measures the relative state vector $\m{S}_i^{-1}\m{S}_j\m{x}_j$ from its neighboring agents, sums up the relative vector $\m{r}_i = \sum_{j\in \mc{N}_i} (\m{S}_i^{-1}\m{S}_j \m{x}_j - \m{x}_i)$, and updates its state variable along the direction of $|\m{S}_i|\m{r}_i$.  The $n$-agent system can be written in the compart form as follows:
\begin{align} \label{eq:MSC_x}
\dot{\m{x}} = -\big(\text{diag}(\text{sign}(\m{S}_i))\m{L}\otimes \m{I}_d \big) \text{blkdiag}(\m{S}_i)\m{x},
\end{align}
where $\m{x}=\text{vec}(\m{x}_1,\ldots,\m{x}_n)$, $\otimes$ stands for the Kronecker product and $\m{S} \triangleq \text{blkdiag}(\m{S}_i)$ is the block diagonal matrix with the matrices $\m{S}_1,\ldots, \m{S}_n$ in the main diagonal. Introduce the variable  transformation $\m{x}_c = \m{S} \m{x}$ and let $\m{\Theta} = |\m{S}|\bar{\m{L}}$, where $\bar{\m{L}} = \m{L}\otimes \m{I}_d$ and $|\m{S}| =  \text{blkdiag}(|\m{S}_i|)$. We can reexpress the $\m{x}_c$-dynamics in the matrix form as follows:
\begin{align}  \label{eq:MSC_y}
\dot{\m{x}}_c = -\m{\Theta} \m{x}_c.
\end{align}
Note that if the signum term is omitted, the system~\eqref{eq:MSC_y} becomes $\dot{\m{x}}_c = -\m{S}\bar{\m{L}}\m{x}_c$. Since $\m{S}\bar{\m{L}}$ is the product of a nondefinite matrix $\m{S}$ with a positive semidefinite matrix $\bar{\m{L}}$, it will be likely that $\m{S}\bar{\m{L}}$ contains eigenvalues with positive real parts, and the system is unstable.
\subsection{Stability analysis}
We will study the system \eqref{eq:MSC_y} in this subsection. Since $|\m{S}_i|$, $i\in \mc{V}$, are positive definite, $\m{|S|}=\text{blkdiag}(|\m{S}_i|)$ is positive definite.  Thus, $\text{rank}(\m{\Theta}) = \text{rank}(\bar{\m{L}}) = dn-d$ and $\text{ker}(\m{\Theta})=\text{ker}(\bar{\m{L}})=\text{im}(\m{1}_n\otimes \m{I}_d)$. The following lemma characterizes the spectrum of $\m{\Theta}$:
\begin{Lemma} \label{lem:spectral_M} Suppose that $\mc{G}$ is undirected and connected. The matrix $\m{\Theta}$ has $d$ zero eigenvalues and $dn-d$ eigenvalues with positive real parts.
\end{Lemma}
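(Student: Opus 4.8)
The plan is to treat $\m{\Theta}=|\m{S}|\bar{\m{L}}$ as the product of a block matrix $|\m{S}|=\text{blkdiag}(|\m{S}_i|)$ whose symmetric part is positive definite and the real symmetric positive semidefinite matrix $\bar{\m{L}}=\m{L}\otimes\m{I}_d$. First I would dispose of the zero eigenvalues: since $|\m{S}|$ is nonsingular, $\text{ker}(\m{\Theta})=\text{ker}(\bar{\m{L}})=\text{im}(\m{1}_n\otimes\m{I}_d)$ has dimension $d$, so $0$ is an eigenvalue of geometric multiplicity $d$. I will also record the two facts about definiteness that are used repeatedly: "$|\m{S}_i|$ positive definite" with a possibly asymmetric $|\m{S}_i|$ means its symmetric part is positive definite, equivalently $\text{Re}(\m{z}^*|\m{S}_i|\m{z})>0$ for all nonzero $\m{z}\in\mb{C}^d$; the same property then holds for $|\m{S}|$ and, since the class of matrices with positive definite symmetric part is closed under inversion, for $|\m{S}|^{-1}$ as well.

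Second, to fix the \emph{algebraic} multiplicity of $0$ at exactly $d$ I would show $\text{ker}(\m{\Theta})\cap\text{im}(\m{\Theta})=\{\m{0}\}$. If $\m{w}$ lies in both, then $|\m{S}|\bar{\m{L}}\m{w}=\m{0}$ forces $\bar{\m{L}}\m{w}=\m{0}$, i.e. $\m{w}\in\text{ker}(\bar{\m{L}})$, while $\m{w}=|\m{S}|\bar{\m{L}}\m{u}$ gives $|\m{S}|^{-1}\m{w}=\bar{\m{L}}\m{u}\in\text{im}(\bar{\m{L}})=\text{ker}(\bar{\m{L}})^\perp$ (here symmetry of $\bar{\m{L}}$ is used). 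Pairing the two memberships yields $\m{w}^\top|\m{S}|^{-1}\m{w}=0$, hence $\m{w}=\m{0}$ by positive definiteness of $|\m{S}|^{-1}$. With rank--nullity this gives $\mb{R}^{dn}=\text{ker}(\m{\Theta})\oplus\text{im}(\m{\Theta})$, both summands $\m{\Theta}$-invariant and $\m{\Theta}$ invertible on $\text{im}(\m{\Theta})$, so the characteristic polynomial is $\lambda^{d}$ times a polynomial with nonzero constant term; thus exactly $dn-d$ eigenvalues (counted with multiplicity) are nonzero.

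Third, I would show that each nonzero eigenvalue has strictly positive real part. If $\m{\Theta}\m{v}=\lambda\m{v}$ with $\m{v}\neq\m{0}$ and $\lambda\neq 0$, nonsingularity of $|\m{S}|$ gives $\bar{\m{L}}\m{v}=\lambda\,|\m{S}|^{-1}\m{v}$; left-multiplying by the conjugate transpose $\m{v}^*$ yields $\m{v}^*\bar{\m{L}}\m{v}=\lambda\,\m{v}^*|\m{S}|^{-1}\m{v}$. Since $\lambda\neq0$ the vector $\m{v}$ is not a zero-eigenvector, so $\bar{\m{L}}\m{v}\neq\m{0}$, and $\bar{\m{L}}\succeq0$ then forces $\m{v}^*\bar{\m{L}}\m{v}>0$ (a positive real number); because $\text{Re}(\m{v}^*|\m{S}|^{-1}\m{v})>0$ too, writing $\lambda=(\m{v}^*\bar{\m{L}}\m{v})/(\m{v}^*|\m{S}|^{-1}\m{v})$ and putting the denominator in polar form shows $\text{Re}(\lambda)>0$. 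Combining the three steps proves the lemma.

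The delicate points — the "main obstacles" — are two. The first is to carry the asymmetry of the $\m{S}_i$ throughout: one must work with the symmetric-part characterization of positive definiteness, its preservation under inversion, and its extension to complex vectors, instead of silently assuming symmetry (which would trivialize matters via a congruence to $|\m{S}|^{1/2}\bar{\m{L}}|\m{S}|^{1/2}$). The second is the algebraic-multiplicity count: the claim "$dn-d$ eigenvalues with positive real parts" is not implied by $\text{rank}(\m{\Theta})=dn-d$ alone, and genuinely needs the semisimplicity argument of the second step.
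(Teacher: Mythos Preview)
Your proof is correct and self-contained, but the route differs from the paper's. The paper factors $\bar{\m{L}}=\bar{\m{H}}^\top\bar{\m{H}}$ through the incidence matrix and uses the Sylvester-type identity $\det(\m{XY})=\det(\m{YX})$ (in the spirit of \cite{Godsil2001}) to show that $\m{\Theta}=|\m{S}|\bar{\m{L}}$ and $\m{N}=\bar{\m{H}}|\m{S}|\bar{\m{H}}^\top$ have the same nonzero spectrum with multiplicity; it then observes that $\m{N}+\m{N}^\top=\bar{\m{H}}(|\m{S}|+|\m{S}|^\top)\bar{\m{H}}^\top\succeq 0$ to conclude that those nonzero eigenvalues lie in the open right half-plane. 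You instead argue directly on $\m{\Theta}$: a Rayleigh-quotient computation on the generalized eigenproblem $\bar{\m{L}}\m{v}=\lambda|\m{S}|^{-1}\m{v}$ gives $\text{Re}(\lambda)>0$ for $\lambda\neq 0$, and the $\ker(\m{\Theta})\cap\text{im}(\m{\Theta})=\{0\}$ step pins down the algebraic multiplicity of $0$. Your approach is more elementary---it avoids the incidence-matrix factorization and the block-determinant trick---and it makes the semisimplicity of the zero eigenvalue explicit, a point the paper's proof leaves largely implicit in the determinant identity. The paper's approach, on the other hand, sits naturally in the spectral-graph-theory toolkit and immediately transports the multiplicity count through the characteristic polynomials without a separate Jordan-structure argument.
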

\begin{proof}
Following the proof of \cite{Godsil2001}[Lem. 8.2.4], we define the following matrices $\bar{\m{H}} = \m{H}\otimes \m{I}_d$,
\begin{align*}
\m{X} = \begin{bmatrix}
\m{I}_{dn} & s^{-1}\m{|S|} \bar{\m{H}}^\top\\
\bar{\m{H}} & \m{I}_{dm}
\end{bmatrix},~\text{and }
\m{Y} = \begin{bmatrix}
\m{I}_{dn} &  \m{0}_{dn\times dm}\\
-\bar{\m{H}} & \m{I}_{dm}
\end{bmatrix}.
\end{align*}
Then,
\begin{align*}
\m{X}\m{Y} &= \begin{bmatrix}
\m{I}_{dn}-s^{-1}\m{|S|} \bar{\m{L}} & s^{-1}\m{|S|}\bar{\m{H}}^\top\\
\m{0}_{dm\times dn} & \m{I}_{dm}
\end{bmatrix}, \\ 
\m{Y}\m{X} &= \begin{bmatrix}
\m{I}_{dn} & s^{-1} \m{|S|}\bar{\m{H}}^\top\\
\m{0}_{dm\times dn} & \m{I}_{dm}-s^{-1}\bar{\m{H}}\m{|S|} \bar{\m{H}}^\top
\end{bmatrix}.
\end{align*}
From the fact that $\text{det}(\m{X}\m{Y}) = \text{det}(\m{Y}\m{X})$, one has
\begin{align} \label{eq:Sylvester}
\text{det}(\m{I}_{dn}-s^{-1}\m{|S|} \bar{\m{L}}) &= \text{det}(\m{I}_{dm}-s^{-1}\bar{\m{H}}\m{|S|} \bar{\m{H}}^\top) \nonumber\\
s^{d(m-n)}\text{det}(s\m{I}_{dn}-\m{|S|} \bar{\m{L}}) &= \text{det}(s\m{I}_{dm}-\bar{\m{H}}\m{|S|} \bar{\m{H}}^\top)
\end{align}
Thus, the nonzero eigenvalues of two matrices $\m{\Theta} = \m{|S|} \bar{\m{L}}$ and $\m{N} = \bar{\m{H}}\m{|S|} \bar{\m{H}}^\top$ are the same. Since $(\m{|S|}$ is positive definite, $\m{N}+\m{N}^\top = \bar{\m{H}}(\m{|S|} + \m{|S|}^\top)\bar{\m{H}}^\top$ is symmetric and positive semidefinite, $\text{rank}(\m{N}+\m{N}^\top) = \text{rank}(\bar{\m{H}})=dn-d$. Therefore, $\m{\Theta}$ has $dn-d$ eigenvalues with positive real parts.
\end{proof}

Let $\m{\Theta}$ be expressed in the Jordan canonical form $\m{\Theta} = \m{W} \m{J} \m{W}^{-1}$, where $\m{W} = [\m{w}_1, \ldots, \m{w}_{dn}] \in \mb{C}^{dn\times dn}$. For brevity, the notation $\m{W}_{[j:k]} = [\m{w}_j,\ldots,\m{w}_k]$ is adopted to denote the columns from $j$ to $k$ ($j < k$) of the matrix $\m{W}$. 

By selecting $\m{W}_{[1:d]} = \m{1}_n \otimes \m{I}_d$,~we have $((\m{W}^{-1})^\top)_{[1:d]}=(|\m{S}^{-1}|)^\top (\m{1}_n \otimes \m{I}_d)\m{P}^\top$, 
where $|\m{S}^{-1}| = \text{blkdiag}(|\m{S}_i^{-1}|)$ and $\m{P} \in \mb{R}^{d\times d}$ is included so that the normalization condition
\begin{align} \label{eq:normalization}
(((\m{W}^{-1})^\top)_{[1:d]})^\top \m{W}_{[1:d]} = \m{I}_d
\end{align}
is satisfied. The equation \eqref{eq:normalization} is equivalent to
\begin{align}
\m{P}(\m{1}_n^\top \otimes \m{I}_d)|\m{S}^{-1}| (\m{1}_n \otimes \m{I}_d) = \m{I}_d,
\end{align}
and it follows that $\m{P} = \Big(\sum_{i=1}^n |\m{S}_i^{-1}|\Big)^{-1}$. Since all nonzero eigenvalues of $-\m{\Theta}$ have negative real parts, there holds
\begin{align}
%\lim_{t\to+\infty} e^{-\m{\Theta}t} &= (\m{1}_n \otimes \m{I}_d) \m{P} (\m{1}_n^\top \otimes \m{I}_d) |\m{S}^{-1}|,\\
\lim_{t\to +\infty} \m{x}_c(t) & = \lim_{t\to+\infty} \text{exp}\left({-\m{\Theta}t}\right) \m{x}_c(0) \nonumber\\
&= (\m{1}_n \otimes \m{I}_d) \m{P} (\m{1}_n^\top \otimes \m{I}_d) |\m{S}^{-1}| \m{x}_c(0) \nonumber\\
&= (\m{1}_n \otimes \m{I}_d) \m{P} \sum_{i=1}^n |\m{S}^{-1}_i| \m{S}_i \m{x}_{i}(0)\nonumber\\
&= \m{1}_n \otimes \left(\m{P} \sum_{i=1}^n \text{sign}(\m{S}_i) \m{x}_{i}(0) \right).
\end{align}
Thus, $\lim_{t\to +\infty} \m{S} \m{x}(t) = \m{1}_n \otimes \m{x}^a,$
where
\begin{align} \label{eq:system_center}
\m{x}^a = \left(\sum_{i=1}^n |\m{S}_i^{-1}|\right)^{-1} \sum_{i=1}^n \text{sign}(\m{S}_i) \m{x}_{i}(t).
\end{align}
Therefore, the system \eqref{eq:MSC_x} asymptotically achieves a matrix-scaled consensus.  Because $\dot{\m{x}}^a(t) = \left(\sum_{i=1}^n |\m{S}_i^{-1}|\right)^{-1} (\m{1}_n^\top \otimes \m{I}_d) \bar{\m{L}} \m{S}\m{x}(t) = \m{0}_d,$
and which shows that ${\m{x}}^a(t)$ is time-invariant. We can now state the main theorem of this section.
\begin{Theorem} \label{thm:1} Suppose that $\mc{G}$ is undirected and connected. Under the matrix-scaled consensus algorithm \eqref{eq:MSC_agent_i}, $\m{x}(t) \to \m{S}^{-1} (\m{1}_n\otimes \m{x}^a)$ as $t\to +\infty$.
\end{Theorem}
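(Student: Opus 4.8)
The plan is to work entirely with the transformed state $\m{x}_c = \m{S}\m{x}$, for which the closed loop is the autonomous linear system $\dot{\m{x}}_c = -\m{\Theta}\m{x}_c$ with $\m{\Theta} = |\m{S}|\bar{\m{L}}$. Since $\m{S}$ is a fixed invertible block-diagonal matrix, it is enough to show that $\m{x}_c(t)\to\m{1}_n\otimes\m{x}^a$ and then left-multiply by $\m{S}^{-1}$; the whole argument therefore reduces to computing $\lim_{t\to+\infty}\exp(-\m{\Theta}t)$.

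First I would pin down the spectral structure of $\m{\Theta}$. Lemma~\ref{lem:spectral_M} already gives that $\m{\Theta}$ has exactly $d$ zero eigenvalues and $dn-d$ eigenvalues in the open right half-plane. I would add that the zero eigenvalue is \emph{semisimple}: its geometric multiplicity is $\dim\text{ker}(\m{\Theta}) = \dim\,\text{im}(\m{1}_n\otimes\m{I}_d) = d$, which coincides with its algebraic multiplicity. Hence, in a Jordan decomposition $\m{\Theta} = \m{W}\m{J}\m{W}^{-1}$, the leading $d\times d$ block of $\m{J}$ is zero and every remaining Jordan block corresponds to an eigenvalue with strictly positive real part, so $\exp(-\m{\Theta}t)$ converges and
\[
\lim_{t\to+\infty}\exp(-\m{\Theta}t) = \m{W}_{[1:d]}\big(((\m{W}^{-1})^\top)_{[1:d]}\big)^\top ,
\]
the oblique projector onto $\text{ker}(\m{\Theta}) = \text{im}(\m{1}_n\otimes\m{I}_d)$ along $\text{im}(\m{\Theta})$.

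Next I would make this projector explicit. Taking $\m{W}_{[1:d]} = \m{1}_n\otimes\m{I}_d$, the matching left-eigenvector block must lie in $\text{ker}(\m{\Theta}^\top)$; using $\bar{\m{L}}^\top = \bar{\m{L}}$, $|\m{S}|^{-1} = |\m{S}^{-1}|$, and connectivity of $\mc{G}$ one checks that $\text{ker}(\m{\Theta}^\top) = \text{im}\big((|\m{S}^{-1}|)^\top(\m{1}_n\otimes\m{I}_d)\big)$, so this block has the form $((\m{W}^{-1})^\top)_{[1:d]} = (|\m{S}^{-1}|)^\top(\m{1}_n\otimes\m{I}_d)\m{P}^\top$ for some $\m{P}\in\mb{R}^{d\times d}$; imposing the biorthonormalization \eqref{eq:normalization} forces $\m{P} = \big(\sum_{i=1}^n|\m{S}_i^{-1}|\big)^{-1}$, which exists because each $|\m{S}_i^{-1}|$ is positive definite. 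Substituting into the projector, using $|\m{S}^{-1}|\m{S} = \text{diag}(\text{sign}(\m{S}_i))\otimes\m{I}_d$ together with $\text{sign}(\m{S}_i^{-1}) = \text{sign}(\m{S}_i)$, yields $\m{x}_c(t)\to\m{1}_n\otimes\big(\m{P}\sum_{i=1}^n\text{sign}(\m{S}_i)\m{x}_i(0)\big) = \m{1}_n\otimes\m{x}^a$. I would also record that $\m{x}^a$ in \eqref{eq:system_center} is conserved along trajectories --- its time derivative is proportional to $(\m{1}_n^\top\otimes\m{I}_d)\bar{\m{L}}\m{S}\m{x} = \m{0}_d$ because $\m{1}_n^\top\m{L} = \m{0}_n^\top$ --- so the limit is determined by the initial condition; transforming back, $\m{x}(t) = \m{S}^{-1}\m{x}_c(t)\to\m{S}^{-1}(\m{1}_n\otimes\m{x}^a)$.

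The main obstacle is the middle of this chain: justifying that $\exp(-\m{\Theta}t)$ has exactly the stated limit. This rests on the semisimplicity of the zero eigenvalue (no nontrivial Jordan chains at $0$), which has to be deduced from the rank/kernel computation rather than taken for granted, and on the consistency of the chosen eigenvector blocks, that is, that $((\m{W}^{-1})^\top)_{[1:d]}$ genuinely forms the first $d$ rows of $\m{W}^{-1}$, which requires those rows to annihilate the invariant subspace of the nonzero spectrum and requires $\sum_{i=1}^n|\m{S}_i^{-1}|$ to be invertible. Once the projector is identified, the remaining steps --- the Kronecker-product bookkeeping and the identities $\text{sign}(\m{S}_i) = \text{sign}(\m{S}_i^{-1})$ and $|\m{S}_i^{-1}| = |\m{S}_i|^{-1}$ --- are routine.
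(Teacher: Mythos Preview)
Your proposal is correct and follows essentially the same spectral/Jordan-form argument the paper develops in the paragraphs immediately preceding the theorem statement: reduce to $\dot{\m{x}}_c=-\m{\Theta}\m{x}_c$, use Lemma~\ref{lem:spectral_M} (with the semisimplicity of the zero eigenvalue, which you justify more explicitly than the paper) to conclude that $\exp(-\m{\Theta}t)$ converges to the oblique projector onto $\text{im}(\m{1}_n\otimes\m{I}_d)$, compute the left eigenvectors and the normalizer $\m{P}=\big(\sum_i|\m{S}_i^{-1}|\big)^{-1}$, and read off the limit together with the conservation of $\m{x}^a$. Note that the paper also supplies a second, independent proof placed after the theorem statement, based on the Lyapunov-type function $V(\m{x}_c)=\m{x}_c^\top\bar{\m{L}}\m{x}_c$ and Barbalat's lemma; that route avoids the Jordan decomposition entirely but arrives at the same conserved quantity and limit.
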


Below, another proof of Theorem~\ref{thm:1} will be given based on Barbalat's lemma. 

\begin{proof}
Consider the function $V(\m{x}_c) = \m{x}_c^\top\bar{\m{L}}\m{x}_c$ which is positive definite with regard to $\bar{\m{L}}\m{x}_c$ and continuously differentiable. Moreover,
\begin{align*}
\dot{V} =-\m{x}^\top_c \bar{\m{L}} (|\m{S}| + |\m{S}|^\top) \bar{\m{L}} \m{x}_c.
\end{align*}
Since $|\m{S}| + |\m{S}|^\top$ is symmetric positive definite, it follows that $\dot{V}\leq 0$. It follows that $\lim_{t\to +\infty} V$ exists and is finite and $\bar{\m{L}}\m{x}_c$ is bounded.  Thus, $\ddot{V} = 2\m{x}^\top_c \bar{\m{L}} (|\m{S}| + |\m{S}|^\top) \bar{\m{L}}|\m{S}| \bar{\m{L}} \m{x}_c$ is also bounded. It follows from Barbalat's lemma \cite{Slotine1991applied} that $\lim_{t\to+\infty} \dot{V} = 0$. Thus, $\m{x}_c(t) \to \text{im}(\m{1}_n\otimes \m{I}_d)$ as $t\to+\infty$. Since $\m{x}_c = \m{S}\m{x}$, there holds
\begin{align*}
(\m{1}_n^\top\otimes \m{I}_d) |\m{S}^{-1}| \dot{\m{x}}_c = - (\m{1}_n^\top\otimes \m{I}_d) \bar{\m{L}} \m{x}_c = \m{0}_d.
\end{align*}
This means $(\m{1}_n^\top \otimes \m{I}_d) |\m{S}^{-1}| \m{x}_c(t) = (\m{1}_n^\top \otimes \m{I}_d) |\m{S}^{-1}|\m{x}_c(0)=\sum_{i=1}^n|\m{S}^{-1}|\m{S}_i\m{x}_i(0)=\sum_{i=1}^n\text{sign}(\m{S}_i)\m{x}_i(0),~\forall t\ge 0$. From $(\m{1}_n^\top \otimes \m{I}_d) |\m{S}^{-1}| (\m{1}_n\otimes \m{x}^*_c) = \Big(\sum_{i=1}^n|\m{S}_i^{-1}| \Big) \m{x}^*_c= \sum_{i=1}^n\text{sign}(\m{S}_i)\m{x}_i(0)$, it follows that $\m{x}^*_c = \m{P} \sum_{i=1}^n\text{sign}(\m{S}_i)\m{x}_i(0) = \m{x}^a$. Thus, $\m{x}_{ci}(t) \to \m{x}^a$, $\m{x}_{i}(t) \to \m{S}_i^{-1} \m{x}^a$ as $t\to +\infty$, or i.e., the system \eqref{eq:MSC_x} globally asymptotically achieves a matrix-scaled consensus.
\end{proof}
\section{Matrix-scaled consensus of double-integrator agents}
\label{sec:4}
\subsection{Proposed consensus laws}
This section studies the matrix-scaled consensus algorithm for a system of double integrators modeled by
\begin{subequations}
\label{eq:double_integrator}
\begin{align}
\dot{\m{x}}_{i}^1 &= \m{x}_{i}^2, \\
\dot{\m{x}}_{i}^2 &= \m{u}_i,~i=1, \ldots, n, 
\end{align}
\end{subequations}
where $\m{x}_{i}^1,~\m{x}_{i}^2 \in \mb{R}^{d}$ are states of agent $i$, and $\m{u}_{i} \in \mb{R}^{d}$ is its control input. Let $\m{x}_i = \text{vec}(\m{x}_{i}^1,\m{x}_{i}^2)$, $\m{x}^1 = \text{vec}(\m{x}_1^1,\ldots,\m{x}_n^1)$, and $\m{x}^2 = \text{vec}(\m{x}_1^2,\ldots,\m{x}_n^2)$. 
The objective is to make the agents' states $\m{x}^1_i$ to asymptotically achieve a matrix scaled consensus, i.e., to make $\m{x}=\text{vec}(\m{x}^1,\m{x}^2)$  asymptotically converge to the set
\begin{align*}
\mc{A}' = \{ \m{x} \in \mb{R}^{2dn}|~ \m{S}_1 \m{x}_{1}^1 =\m{S}_2 \m{x}_{2}^1 = \ldots = \m{S}_n \m{x}_{n}^1,~\m{x}^2 = \m{0}_{dn}\}
\end{align*}

The following consensus law is proposed to achieve the matrix-scaled consensus:
\begin{align} 
\m{u}_i &= -\text{sign}(\m{S}_i) \sum_{j\in \mc{N}_i} (\m{S}_i \m{x}_i^1 - \m{S}_j \m{x}_j^1) - \alpha \m{x}_i^2, \label{eq:MSC_double1a}
\end{align}
where $\alpha>0$ is a control gain. The $n$-agent system under \eqref{eq:MSC_double1a} is given as follows
\begin{subequations}
\label{eq:MSC_double1b}
\begin{align} 
\dot{\m{x}}^1&= \m{x}^2,\\
\dot{\m{x}}^{2}&= - (\text{sign}(\m{S}_i))\otimes \m{I}_d) \bar{\m{L}} \m{S} \m{x}^1 - \alpha \m{x}^2. 
\end{align}
\end{subequations}

\subsection{Stability analysis}
\label{subsection:4b}
The behavior of the system \eqref{eq:MSC_double1b} is given in the following theorem. 
\begin{Theorem} Suppose that $\mc{G}$ is undirected and connected. Under the consensus law \eqref{eq:MSC_double1a}, $\m{x}(t)$ asymptotically converges to a point in $\mc{A}'$.
\end{Theorem}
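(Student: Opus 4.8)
The plan is to mimic the single-integrator argument by first removing the scaling through the change of variables $\m{x}_c^1 = \m{S}\m{x}^1$, $\m{x}_c^2 = \m{S}\m{x}^2$. Since $\m{S}\,(\text{diag}(\text{sign}(\m{S}_i))\otimes\m{I}_d) = |\m{S}|$, the closed loop \eqref{eq:MSC_double1b} becomes the constant-coefficient linear system $\dot{\m{x}}_c^1 = \m{x}_c^2$, $\dot{\m{x}}_c^2 = -\m{\Theta}\m{x}_c^1 - \alpha\m{x}_c^2$, with $\m{\Theta} = |\m{S}|\bar{\m{L}}$ exactly the matrix studied in Lemma~\ref{lem:spectral_M}. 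Because $\m{S}$ is invertible, $\m{x}(t)$ converges to a point of $\mc{A}'$ if and only if $\m{x}_c^1(t)\to\m{1}_n\otimes\m{x}^a$ for some $\m{x}^a\in\mb{R}^d$ and $\m{x}_c^2(t)\to\m{0}_{dn}$, so it suffices to analyze the spectrum of $\m{M} = \begin{bmatrix}\m{0}_{dn} & \m{I}_{dn} \\ -\m{\Theta} & -\alpha\m{I}_{dn}\end{bmatrix}$.

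Next I would reduce $\m{M}$ using the Jordan form $\m{\Theta} = \m{W}\m{J}\m{W}^{-1}$ already set up after Lemma~\ref{lem:spectral_M}: conjugating $\m{M}$ by $\text{blkdiag}(\m{W},\m{W})$ (followed by an interleaving permutation) shows that its characteristic polynomial is $\prod_k (s^2 + \alpha s + \mu_k)$ over the eigenvalues $\mu_k$ of $\m{\Theta}$, the Jordan coupling terms affecting only multiplicities and not the roots' locations. The $d$-fold eigenvalue $\mu_k = 0$ contributes the factor $s(s+\alpha)$, i.e. $d$ zero eigenvalues and $d$ eigenvalues at $-\alpha$; moreover, since the zero eigenvalue of $\m{\Theta}$ is semisimple with eigenspace $\text{im}(\m{1}_n\otimes\m{I}_d)$ and $\text{im}(\m{\Theta})\oplus\text{ker}(\m{\Theta}) = \mb{R}^{dn}$, one checks that the zero eigenvalue of $\m{M}$ is also semisimple, with eigenspace $\{\text{vec}(\m{v},\m{0}_{dn})\,:\,\m{v}\in\text{im}(\m{1}_n\otimes\m{I}_d)\}$. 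For each nonzero $\mu_k$, Lemma~\ref{lem:spectral_M} gives $\text{Re}(\mu_k) > 0$, and applying Lemma~\ref{lem:complex_quadratic_equation} to $p(s) = s^2 + \alpha s + \mu_k$ (so $a = \alpha$, $b = 0$, $c = \text{Re}(\mu_k)$, $d = \text{Im}(\mu_k)$ in the notation of that lemma) shows $p$ is Hurwitz if and only if $\alpha^2\,\text{Re}(\mu_k) - (\text{Im}(\mu_k))^2 > 0$. Hence, under the condition $\alpha^2\,\text{Re}(\mu_k) > (\text{Im}(\mu_k))^2$ for every nonzero eigenvalue $\mu_k$ of $\m{\Theta}$ — the convergence condition relating the damping gain and the eigenvalues of the scaled Laplacian, which holds for every $\alpha>0$ when the $\m{S}_i$ are symmetric since then all $\mu_k$ are real and positive — every eigenvalue of $\m{M}$ other than the $d$ semisimple zeros has strictly negative real part.

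It then follows that $\text{exp}(\m{M}t)$ converges to the spectral projector onto $\text{ker}(\m{M})$ along the stable subspace, so $\text{vec}(\m{x}_c^1(t),\m{x}_c^2(t)) \to \text{vec}(\m{1}_n\otimes\m{x}^a,\m{0}_{dn})$ for some $\m{x}^a$. To identify $\m{x}^a$ I would use the conserved quantity $(\m{1}_n^\top\otimes\m{I}_d)|\m{S}^{-1}|(\alpha\m{x}_c^1 + \m{x}_c^2)$, whose time derivative vanishes because $|\m{S}^{-1}|\m{\Theta} = \bar{\m{L}}$ and $(\m{1}_n^\top\otimes\m{I}_d)\bar{\m{L}} = \m{0}$; this pins down $\m{x}^a = \big(\sum_{i=1}^n |\m{S}_i^{-1}|\big)^{-1}\sum_{i=1}^n \text{sign}(\m{S}_i)\big(\m{x}_i^1(0) + \tfrac{1}{\alpha}\m{x}_i^2(0)\big)$. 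Undoing the change of variables gives $\m{S}_i\m{x}_i^1(t)\to\m{x}^a$ for all $i$ and $\m{x}^2(t)\to\m{0}_{dn}$, i.e. $\m{x}(t)$ converges to a point of $\mc{A}'$. I expect the spectral step to be the main obstacle: unlike the single-integrator case, an asymmetric scaling matrix can make $\m{\Theta}$ have genuinely complex eigenvalues, so stability of $\m{M}$ is not automatic — this is precisely where Lemma~\ref{lem:complex_quadratic_equation} and the gain condition on $\alpha$ enter — with a secondary technicality being the verification that the zero eigenvalue of $\m{M}$ carries no $2\times 2$ Jordan block, without which $\m{x}_c^1$ could grow linearly in $t$ rather than converge.
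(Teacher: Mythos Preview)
Your proposal is correct and follows essentially the same route as the paper: the same change of variables $\m{x}_c^l=\m{S}\m{x}^l$, the same block matrix (denoted $\m{N}_1$ there), the same factorization $\prod_k(s^2+\alpha s+\mu_k)$ via the Jordan form of $\m{\Theta}$, and the same application of Lemma~\ref{lem:complex_quadratic_equation} leading to the gain condition $\alpha^2\min_k\text{Re}(\mu_k)>\max_k(\text{Im}(\mu_k))^2$. The only cosmetic differences are that you identify $\m{x}^a$ through the conserved quantity $(\m{1}_n^\top\otimes\m{I}_d)|\m{S}^{-1}|(\alpha\m{x}_c^1+\m{x}_c^2)$ whereas the paper writes down the left eigenvectors of $\m{N}_1$ directly (these are equivalent), and you explicitly verify the semisimplicity of the zero eigenvalue of $\m{M}$, a point the paper leaves implicit.
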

\begin{proof}
Let $\m{x}^1_c = \m{S} \m{x}^1$ and $\m{x}^2_c = \m{S} \m{x}^2$, we can rewrite the system \eqref{eq:MSC_double1b} as follows
\begin{align} \label{eq:MSC_double1c}
\begin{bmatrix}
\dot{\m{x}}_c^1\\
\dot{\m{x}}_c^2
\end{bmatrix}
&= 
\begin{bmatrix}
\m{0}_{dn} & \m{I}_{dn}\\
- |\m{S}|\bar{\m{L}} & - \alpha \m{I}_{dn}
\end{bmatrix} 
\begin{bmatrix}
\m{x}^1_c \\ \m{x}^2_c
\end{bmatrix} = \m{N}_1 \begin{bmatrix}
\m{x}^1_c \\ \m{x}^2_c
\end{bmatrix}.
\end{align}
Substituting $\m{\Theta} = |\m{S}|\bar{\m{L}} = \m{W}\m{J}\m{W}^{-1}$ into the characteristic equation $\text{det}(s\m{I}_{2dn} - \m{N}_1)=0$, one has
\begin{align}
\text{det}(s^2 \m{I}_{dn} + \alpha s \m{I}_{dn} + \m{W}\m{J}\m{W}^{-1}) &= 0, \nonumber\\
\text{or, equivalently } \prod_{k=1}^{dn} (s^2 + \alpha s + \mu_k) = & 0,\qquad\quad
\end{align}
where $\mu_1 = \ldots = \mu_d =0$ and $\mb{C} \ni \mu_k = a_k + j b_k \ne 0$, $\forall k= d+1, \ldots, {dn}$. Based on Lemma~\ref{lem:complex_quadratic_equation}, each polynomial $s^2 + \alpha s + \mu_k$ is Hurwitz if and only if $a_k > {b_k^2}/{\alpha^2}$. Thus, by choosing $\alpha$ so that
\begin{equation} \label{eq:ConvergenceCondition1}
\min_{k=d+1,\ldots,dn}\text{Re}(\mu_k) > \frac{(\max_{k=d+1,\ldots,dn}\text{Im}(\mu_k))^2}{\alpha^2},
\end{equation}
the matrix $\m{N}_1$ has $d$ zero eigenvalues and $2dn-d$ eigenvalues with negative real parts. The right and left eigenvectors of $\m{N}_1$  corresponding to the zero eigenvalues, are columns and rows of the matrices 
%\begin{align*}
$\begin{bmatrix}
\m{1}_{n} \otimes \m{I}_d\\
\m{0}_n \otimes \m{I}_d
\end{bmatrix}$, and $\m{P}\begin{bmatrix}
(\m{1}_{n}^\top \otimes \m{I}_d)|\m{S}|^{-1}& {\alpha}^{-1}(\m{1}_{n}^\top \otimes \m{I}_d )|\m{S}|^{-1}
\end{bmatrix}.$
%\end{align*}
Thus,
\begin{align*}
\lim_{t\to +\infty} \begin{bmatrix}
\m{x}^1_c \\ \m{x}^2_c
\end{bmatrix} &= \lim_{t\to +\infty} \text{exp}({\m{N}_1t}) \begin{bmatrix}
\m{x}^1_c(0) \\ \m{x}^2_c(0)
\end{bmatrix} \\
&= \begin{bmatrix}
\m{1}_n\otimes \Big(\m{x}^{a1} + \frac{\m{x}^{a2}}{\alpha} \Big)\\ \m{0}_{dn}
\end{bmatrix}
\end{align*}
where $\m{x}^{al} = \Big(\sum_{i=1}^n |\m{S}_i^{-1}|\Big)^{-1} \sum_{i=1}^n \text{sign}(\m{S}_i)\m{x}_{i}^l(0),~l=1, 2.$
This implies that $\lim_{t\to +\infty} \m{S}_i\m{x}_i^1(t) = \m{x}^{a1} + \alpha^{-1} \m{x}^{a2}$ and $\lim_{t\to +\infty} \m{x}_i^2(t) = \m{0}_d$, $=1,\ldots, n$.
\end{proof}
\begin{figure}[t]
\centering
\includegraphics[height=5cm]{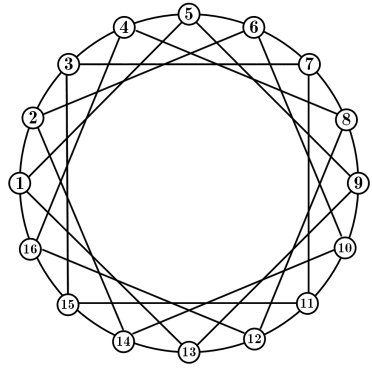}
\caption{The 16-vertex graph used in the simulations.}
\label{fig:graph}
\end{figure}
\section{Simulation results}
\label{sec:5}
Consider a system of sixteen agents having the interaction graph as depicted in Fig.~\ref{fig:graph}. We will provide some simulations to support the results in the previous sections. 

\subsection{Simulation 1: MSC of single integrators}
Let the SO(2) rotation matrix of angle $\theta$ (rad) be denoted by $\m{R}(\theta) = \begin{bmatrix}
\cos(\theta) & -\sin(\theta) \\ \sin(\theta) & \cos(\theta)
\end{bmatrix}$. Let the scaling matrices be chosen as $\m{S}_1=\ldots=\m{S}_6 = \m{R}(0)=\m{I}_2$ (positive definite), $\m{S}_7 = \ldots = \m{S}_{11} =\m{R}(\frac{2\pi}{3})$, and $\m{S}_{12} = \ldots = \m{S}_{16} = \m{R}(\frac{4\pi}{3})$ (negative definite). The initial condition $\m{x}(0)$ is randomly selected. The simulation results of~\eqref{eq:MSC_agent_i} depicted in Fig.~\ref{fig:2} show that the agents converge to three clusters, which are three vertices of an equilateral triangle. Agents with the same matrix $\m{S}_i$ converge to the same cluster. Notice that $\theta$ should be  not equal to $\frac{\pi}{2}$ so that the analysis holds.

Next, the scaling matrices are rotation matrices in $SO(3)$ so that $\m{S}_1 = \ldots = \m{S}_4$, $\m{S}_5 = \ldots = \m{S}_8$, $\m{S}_9 = \ldots = \m{S}_{12}$, and $\m{S}_{13} = \ldots = \m{S}_{16}$. The matrices $\m{S}_1$, $\m{S}_5$ are chosen to be positive definite and $\m{S}_9$, $\m{S}_{13}$ are chosen to be negative definite. Fig.~\ref{fig:3} shows that the agents converge to 4 clusters in the space. %Note that $SO(3)$ matrices contain indefinite matrices.
\begin{figure*}
\centering
\subfloat{\includegraphics[width = 0.25\linewidth]{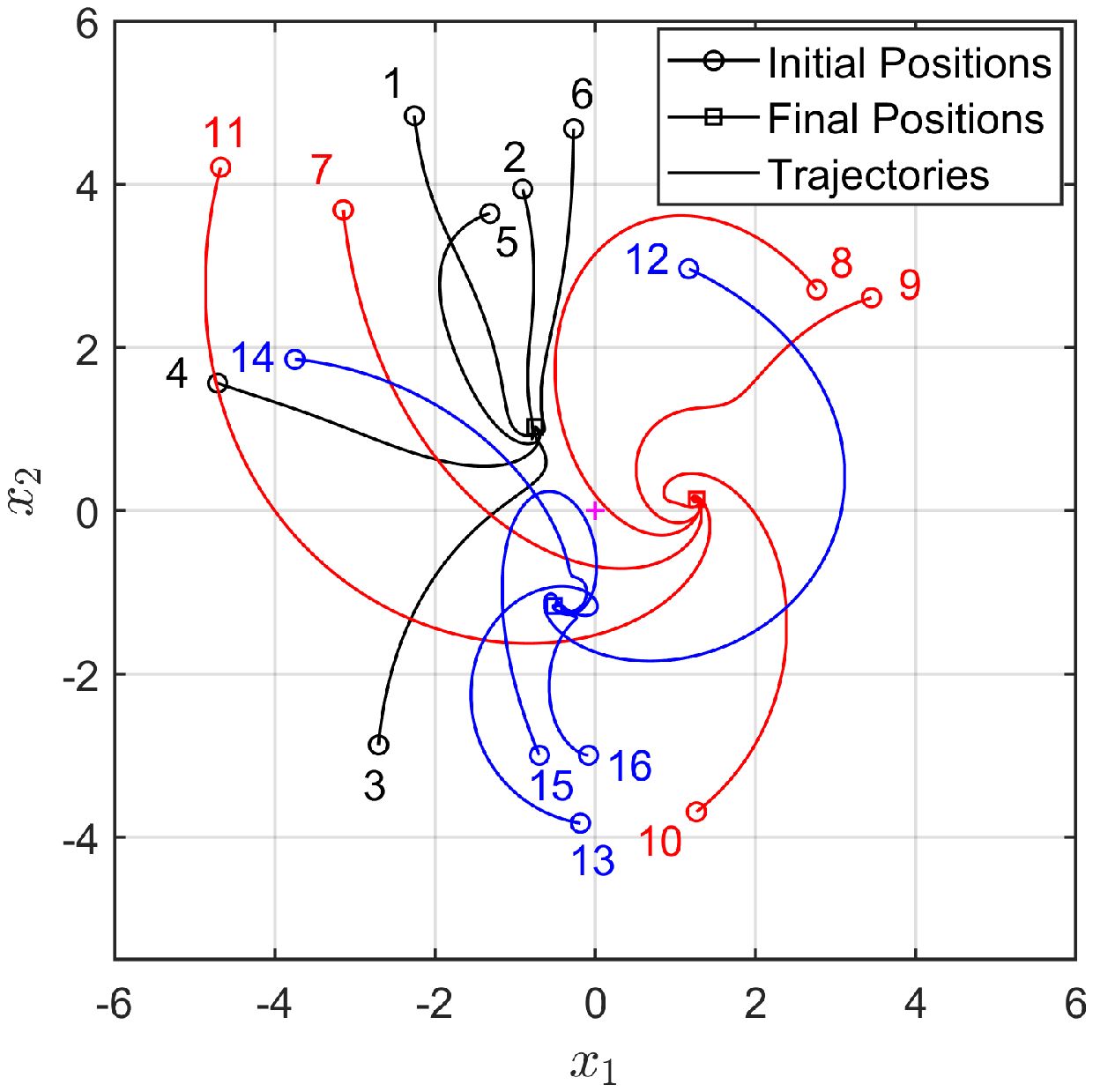}} \qquad \qquad
\subfloat{\includegraphics[width = 0.16\linewidth]{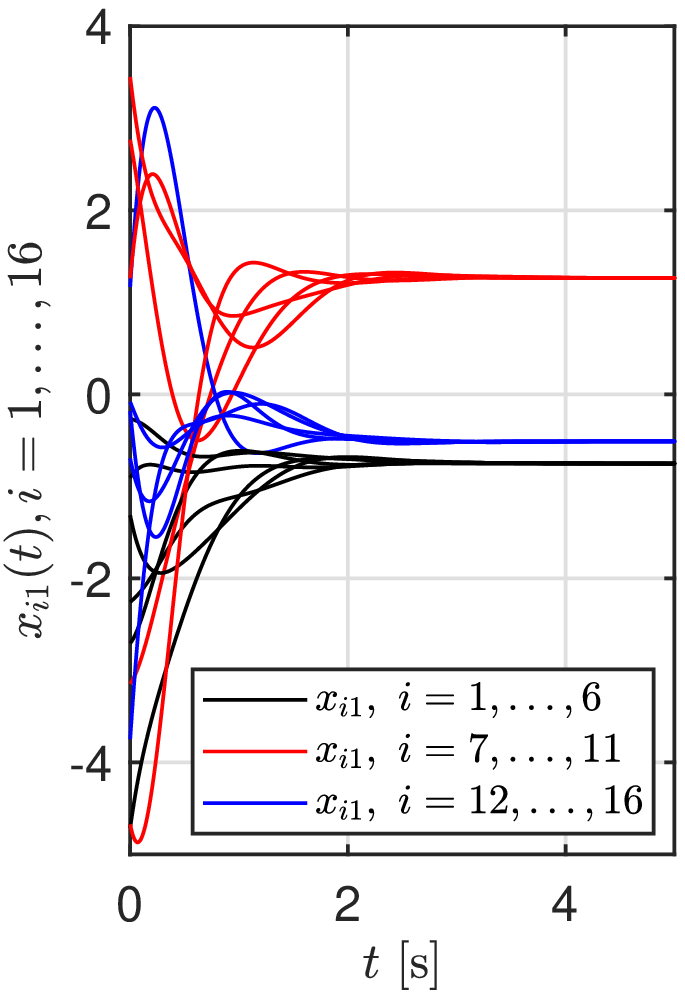}} \qquad \qquad
\subfloat{\includegraphics[width = 0.16\linewidth]{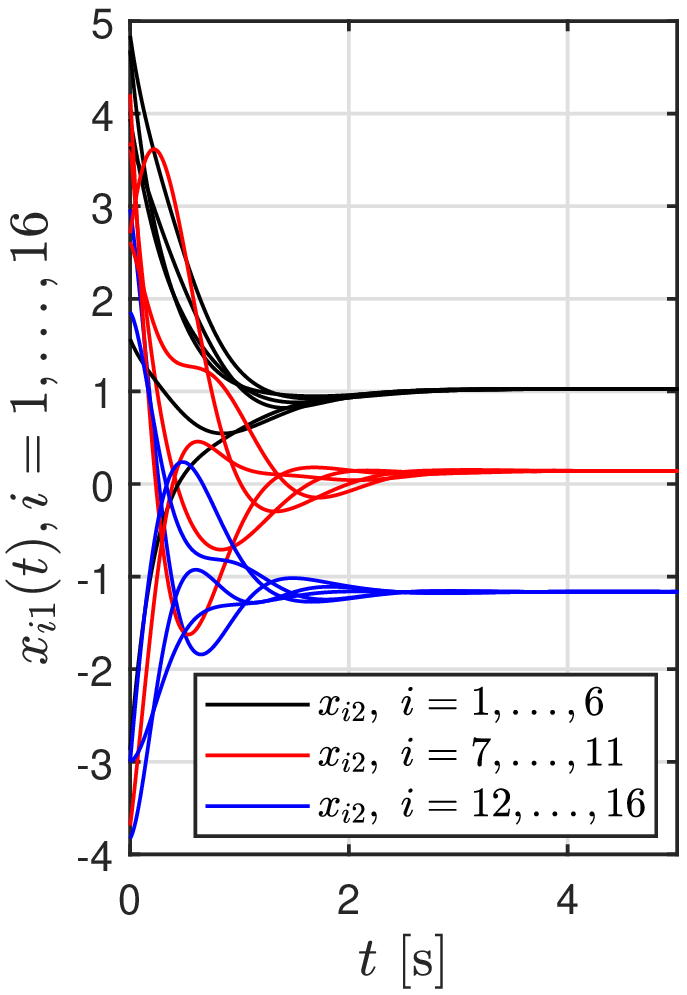}}
\caption{The agents converge to 3 clusters in the plane under the matrix-scaled consensus algorithm \eqref{eq:MSC_agent_i}.}
\label{fig:2}
\end{figure*}
\begin{figure*}
\centering
%\begin{tabular}{cc}
\subfloat{\includegraphics[width = 0.25\linewidth]{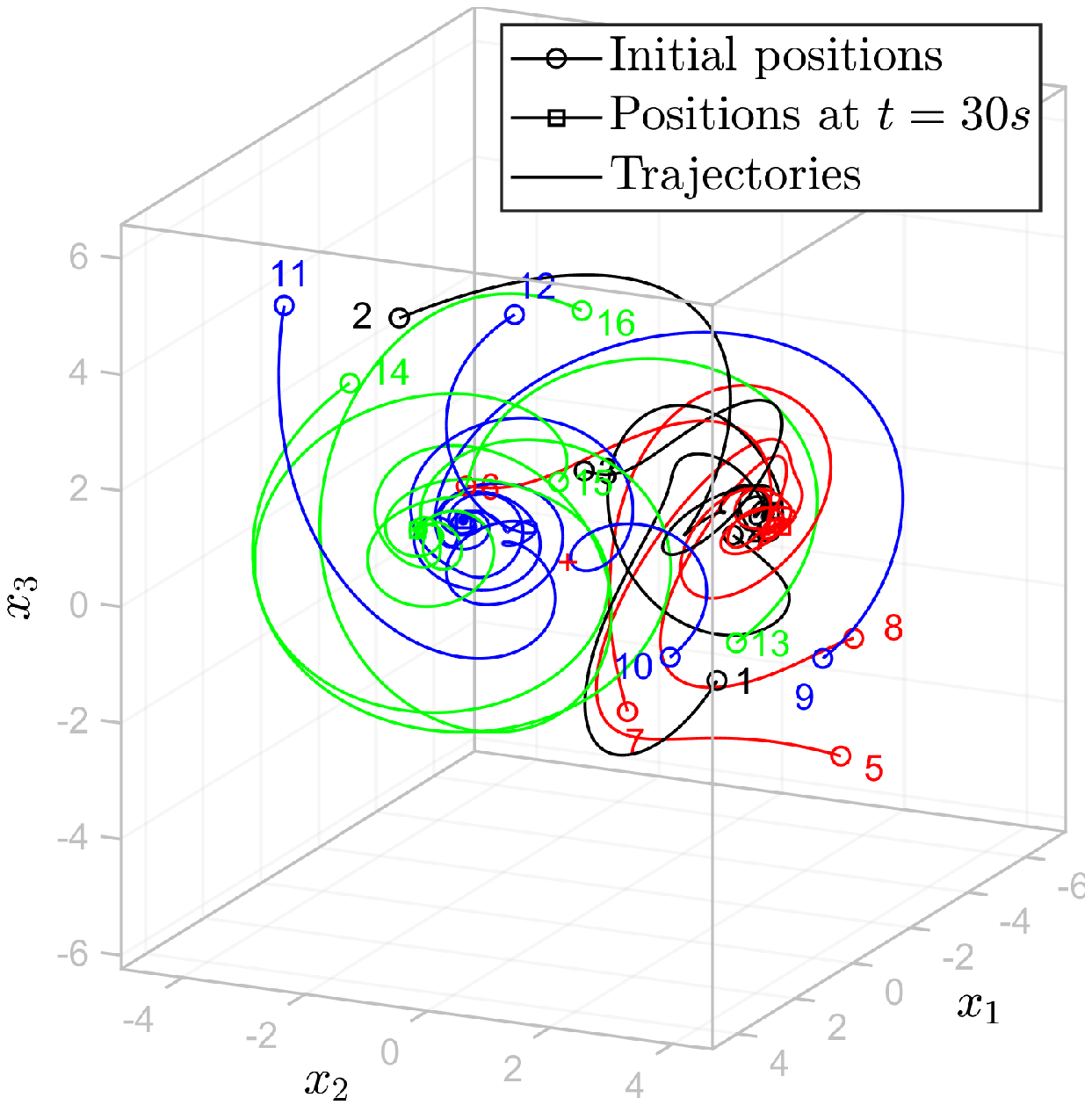}} \qquad
%\begin{tabular}{c}
\subfloat{\includegraphics[width = 0.16\linewidth]{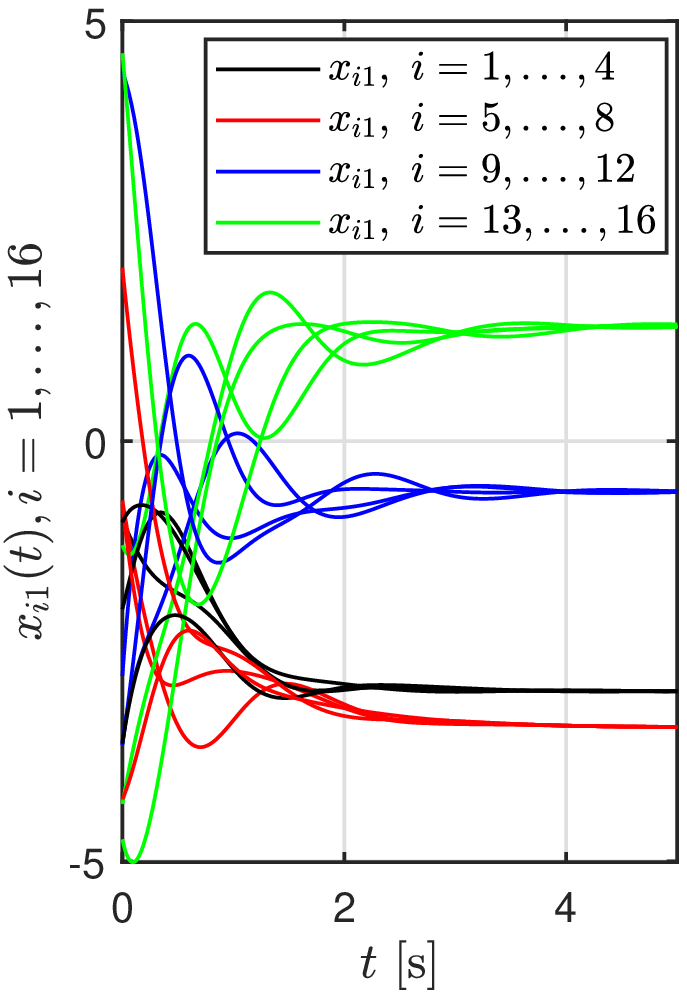}} \qquad
\subfloat{\includegraphics[width = 0.16\linewidth]{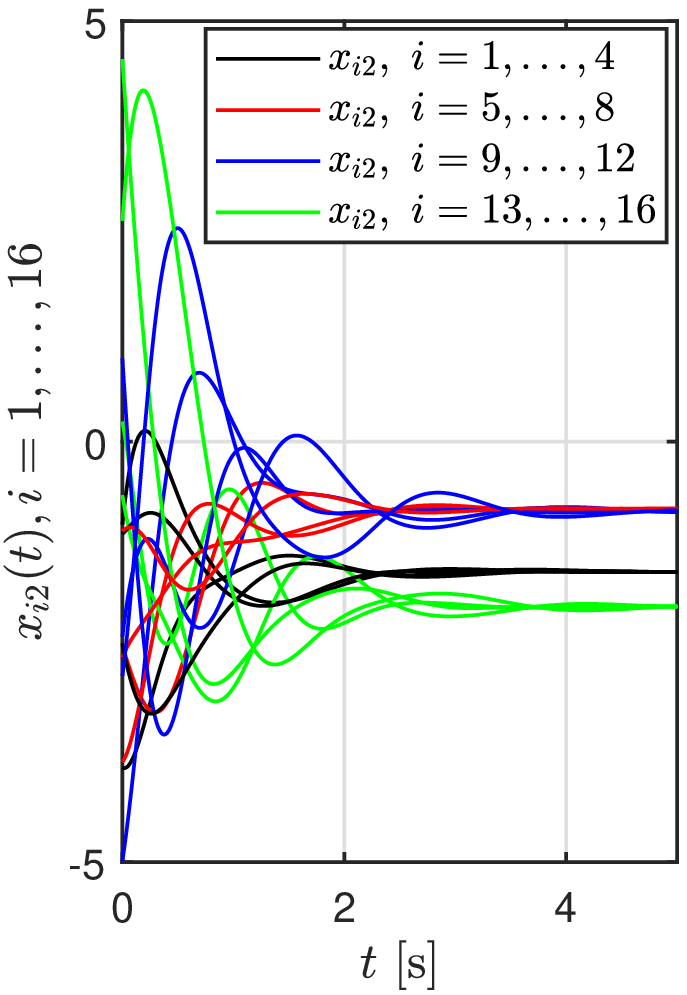}} \qquad
\subfloat{\includegraphics[width = 0.16\linewidth]{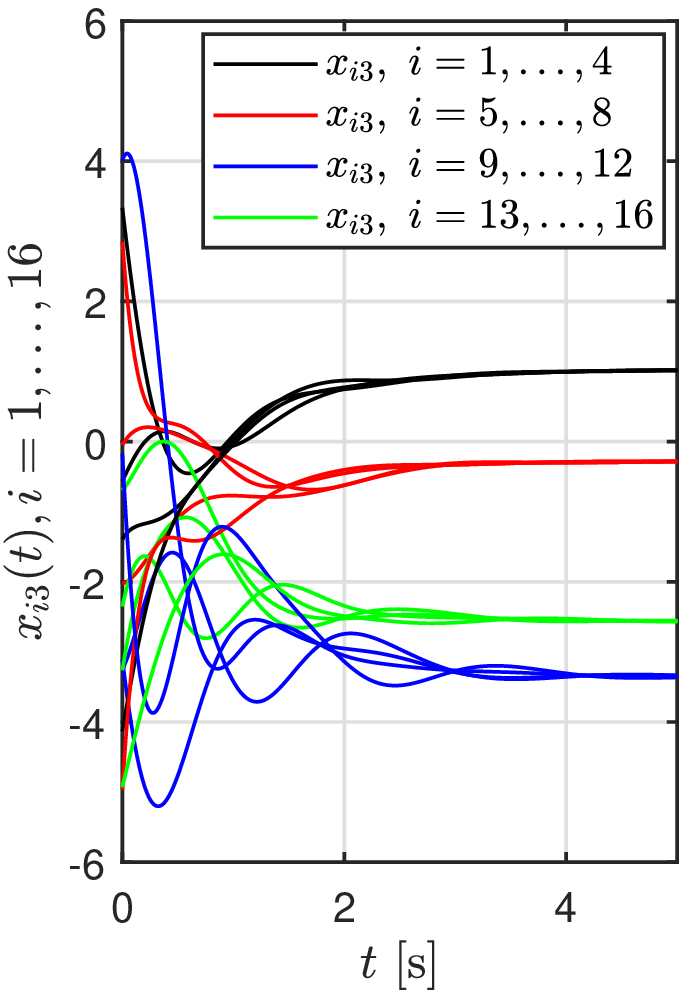}}
%\end{tabular}
%\end{tabular}
\caption{The agents converge to 4 clusters in three-dimensional space under the matrix-scaled consensus algorithm \eqref{eq:MSC_agent_i}.}
\label{fig:3}
\end{figure*}

\subsection{Simulation 2: MSC of double-integrators}
Next, let the agents be modeled by double-integrators. The scaling matrices are $\m{S}_i = \m{R}(\frac{\pi}{4}), i=1,\ldots, 4$, $\m{S}_i = \m{R}(\frac{3\pi}{4}),~i=5,\ldots, 8$, $\m{S}_i = \m{R}(-\frac{3\pi}{4}),~i=9,\ldots, 12$, and $\m{S}_i = \m{R}(-\frac{\pi}{4}),~i=13,\ldots, 16$. We conduct three simulations of the MSC algorithm \eqref{eq:MSC_double1a} with $\alpha = 1.8$, $1.9724$ and $3$, respectively. The trajectories of the agents, corresponding to these parameters are depicted in Fig.~\ref{fig:4}. For ${\alpha}=1.8$, the system is unstable. Correspondingly, Figs.~\ref{fig:4} (a), (d)--(e) show the state variables grow unbounded. For $\alpha = 1.9724$, $\bm{\Theta}$ has pairs of imaginary eigenvalues with the corresponding independent eigenvectors, $x_{i}^l, l=1, 2,$ are asymptotic to sinusoidal functions (see Figs.~\ref{fig:4} (b), (h)--(k)). Finally, for $\alpha = 3$, the condition \eqref{eq:ConvergenceCondition1} is satisfied. The agents converge to 4 clusters as shown in Figs.~\ref{fig:4} (c), (l)--(o).

\begin{figure*}
\subfloat[$\alpha=1.8$]{\includegraphics[width=0.33\linewidth]{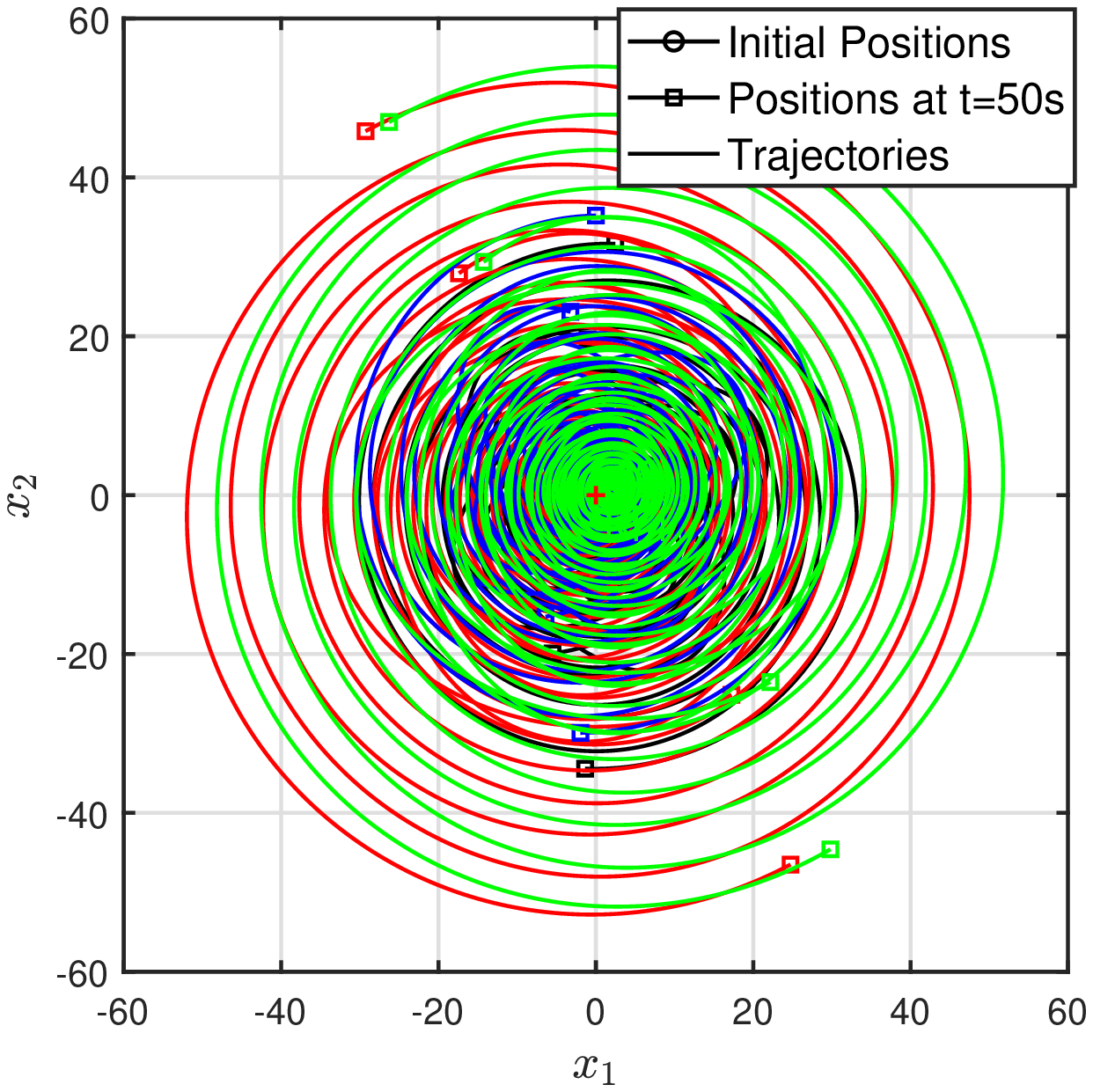}} \hfill
\subfloat[$\alpha=1.9724$]{\includegraphics[width=0.33\linewidth]{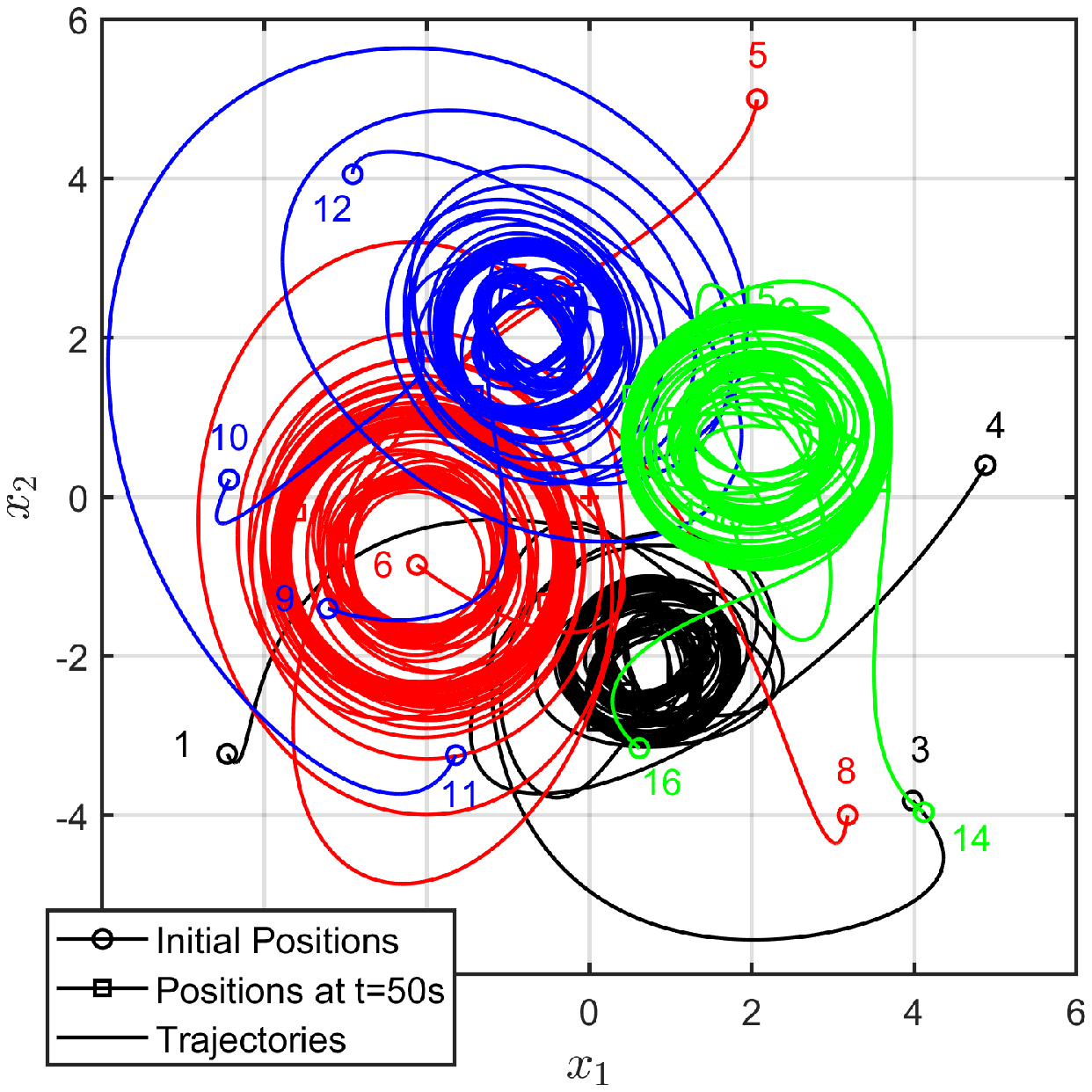}} \hfill
\subfloat[$\alpha=3$]{\includegraphics[width=0.33\linewidth]{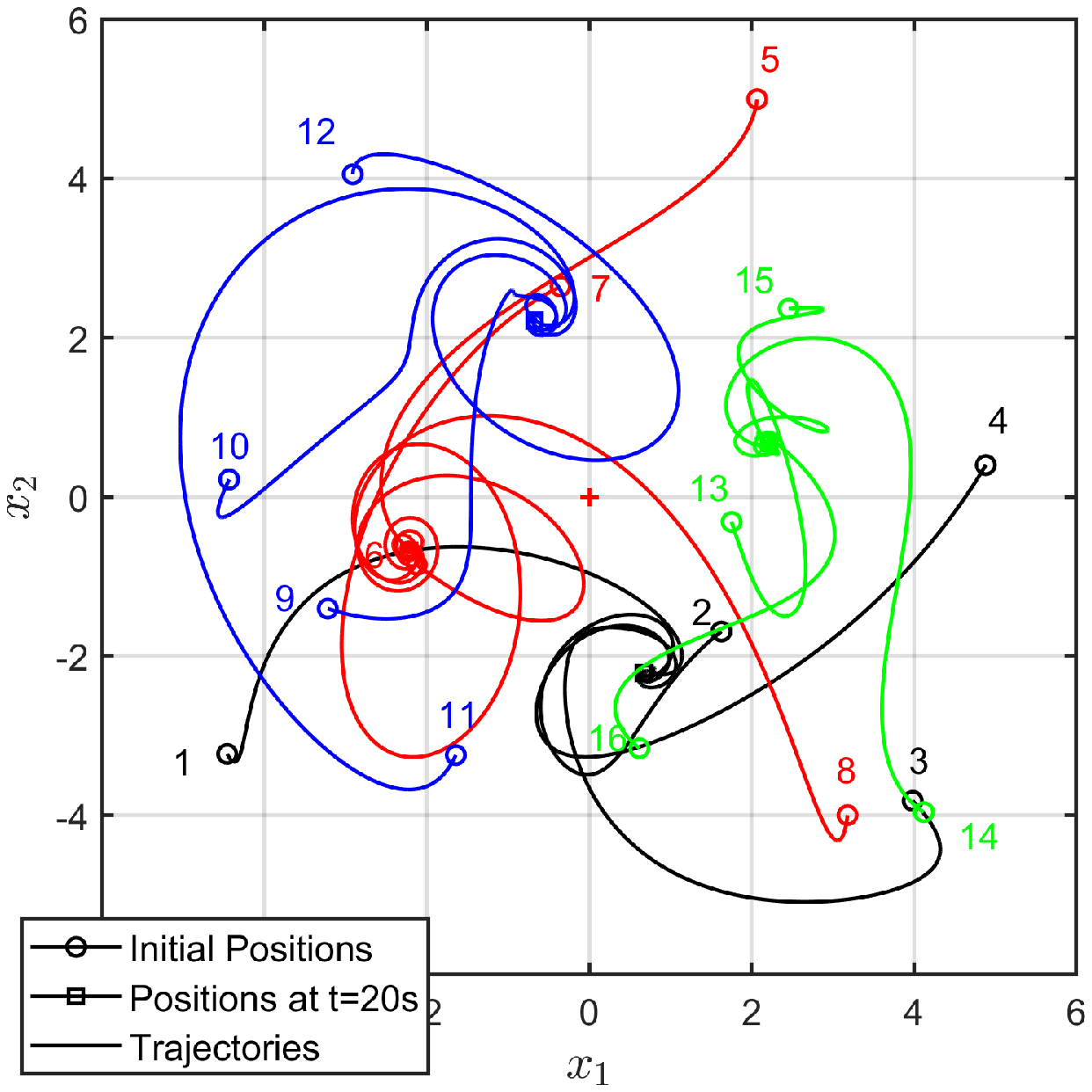}}\\
\subfloat[{$x_{i1}^1$ vs $t$ [s]}]{\includegraphics[width=0.24\linewidth]{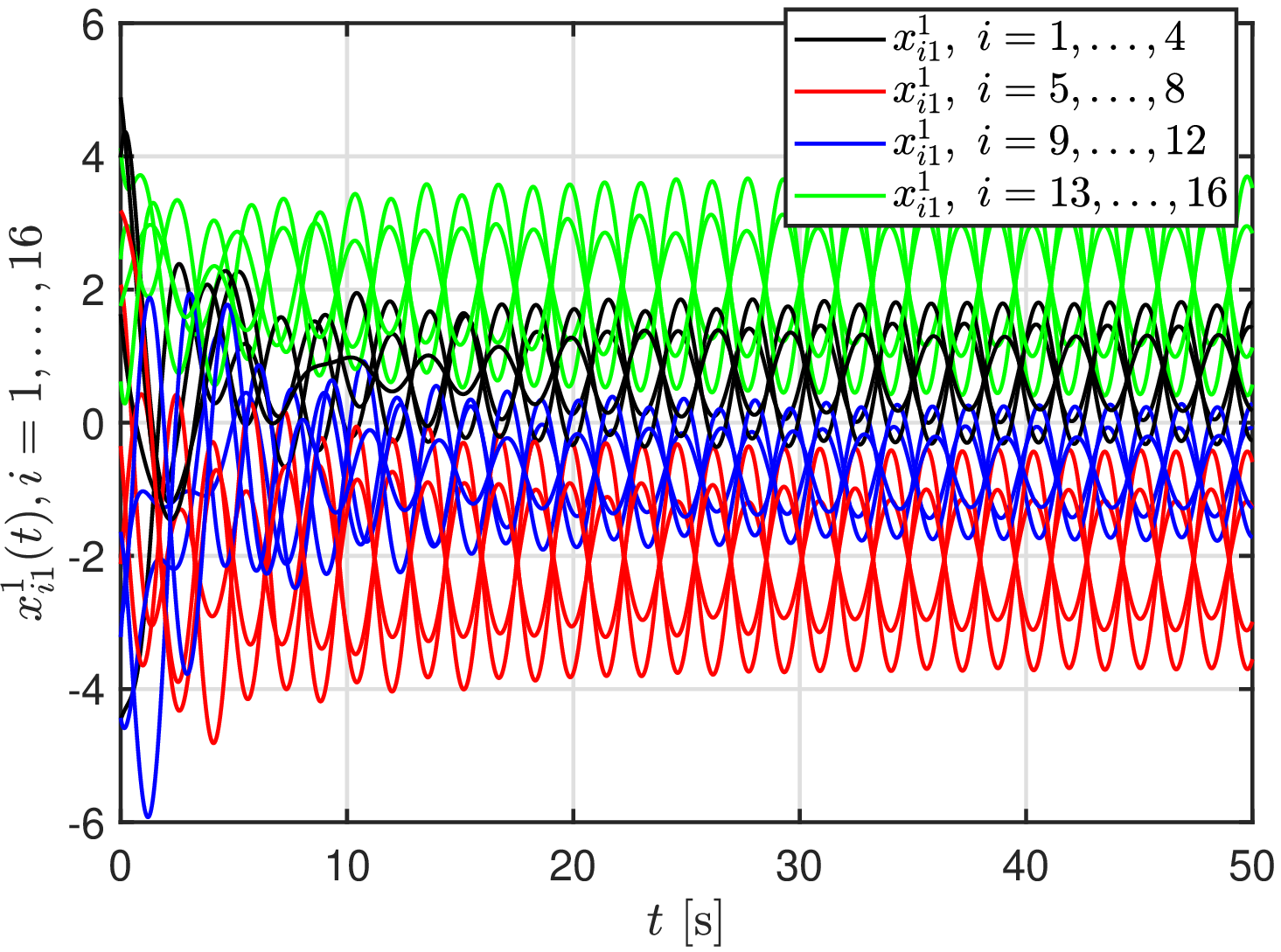}} \hfill
\subfloat[{$x_{i2}^1$ vs $t$ [s]}]{\includegraphics[width=0.24\linewidth]{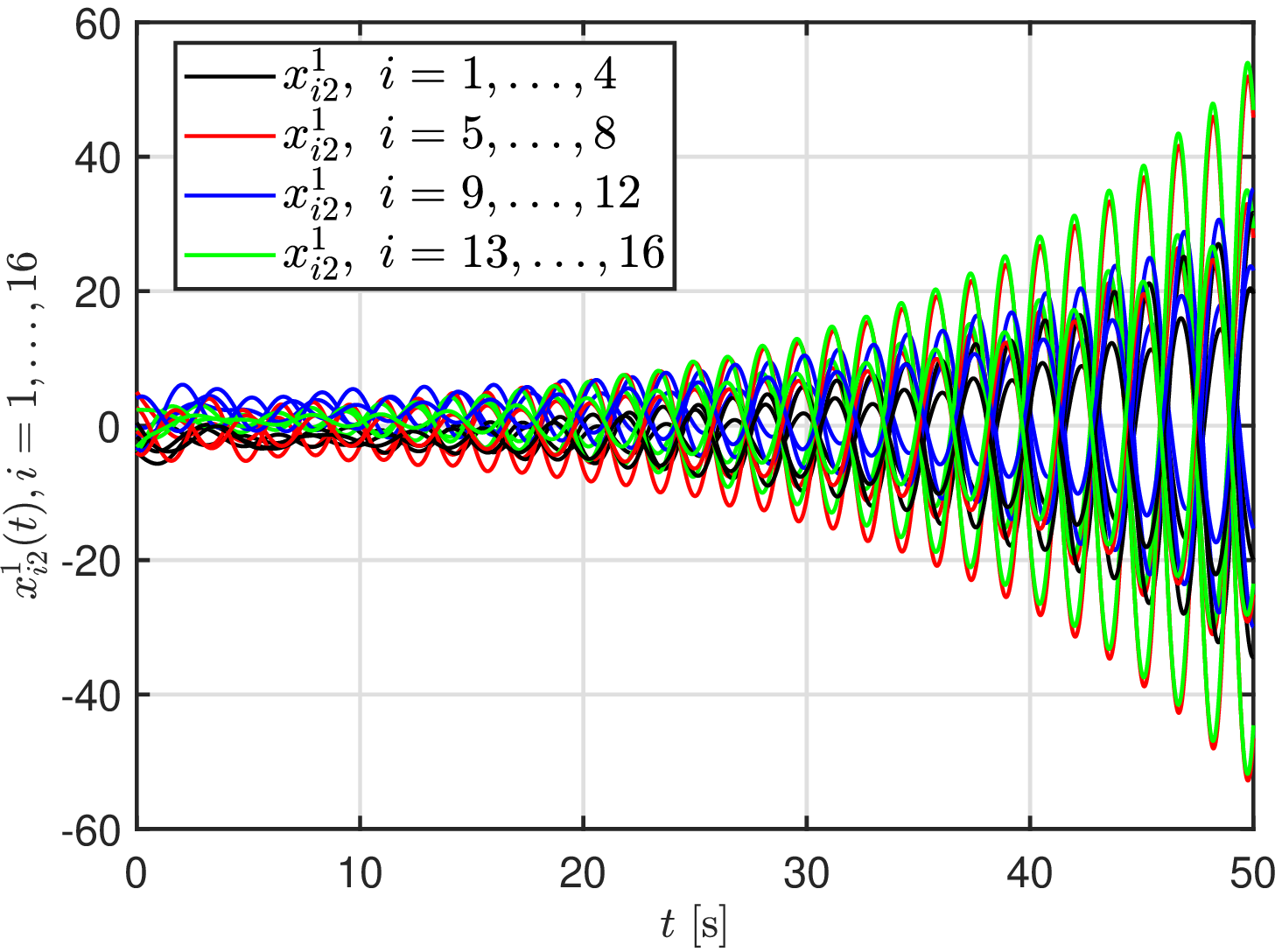}} \hfill
\subfloat[{$x_{i1}^2$ vs $t$ [s]}]{\includegraphics[width=0.24\linewidth]{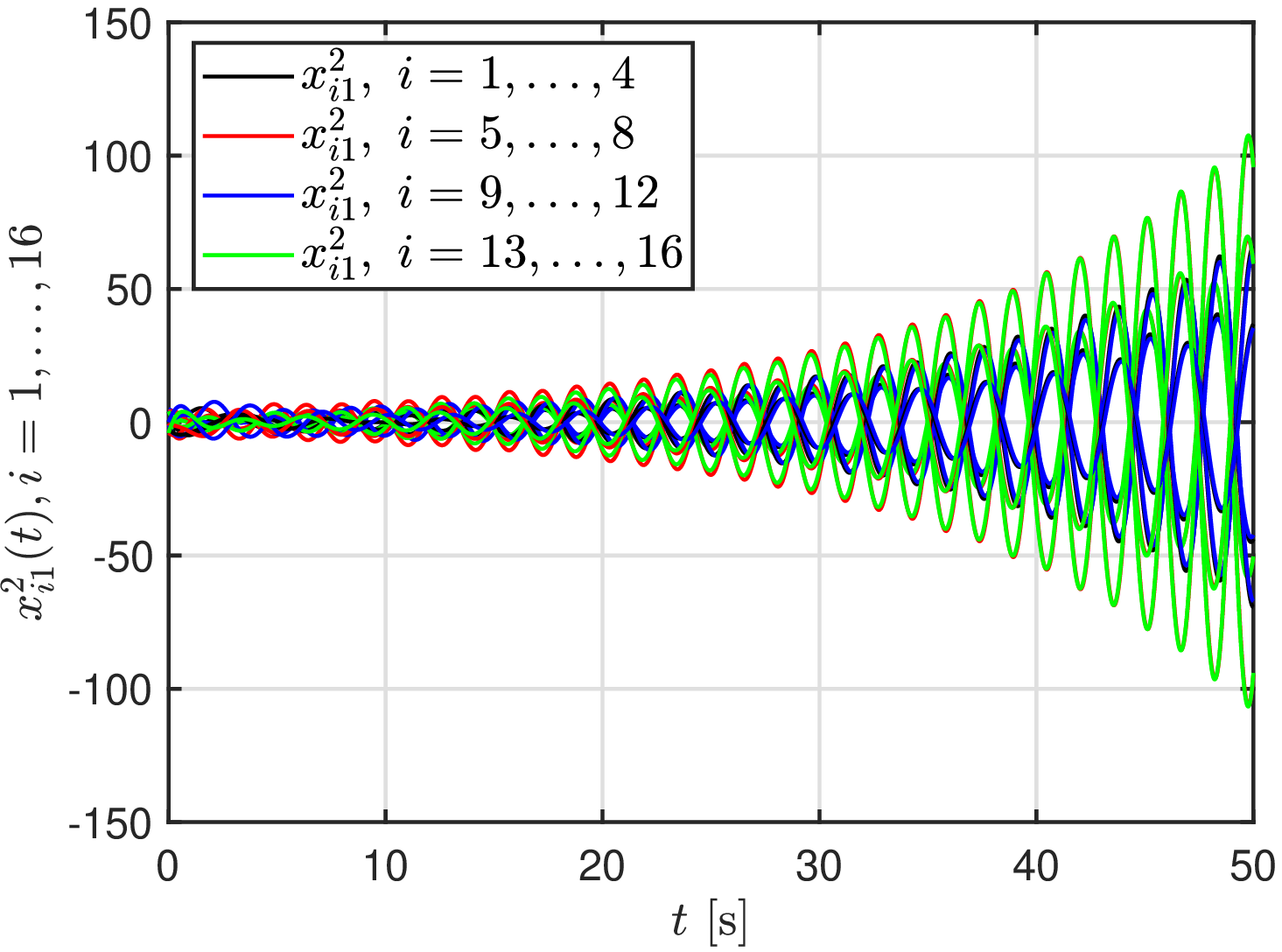}} \hfill
\subfloat[{$x_{i2}^2$ vs $t$ [s]}]{\includegraphics[width=0.24\linewidth]{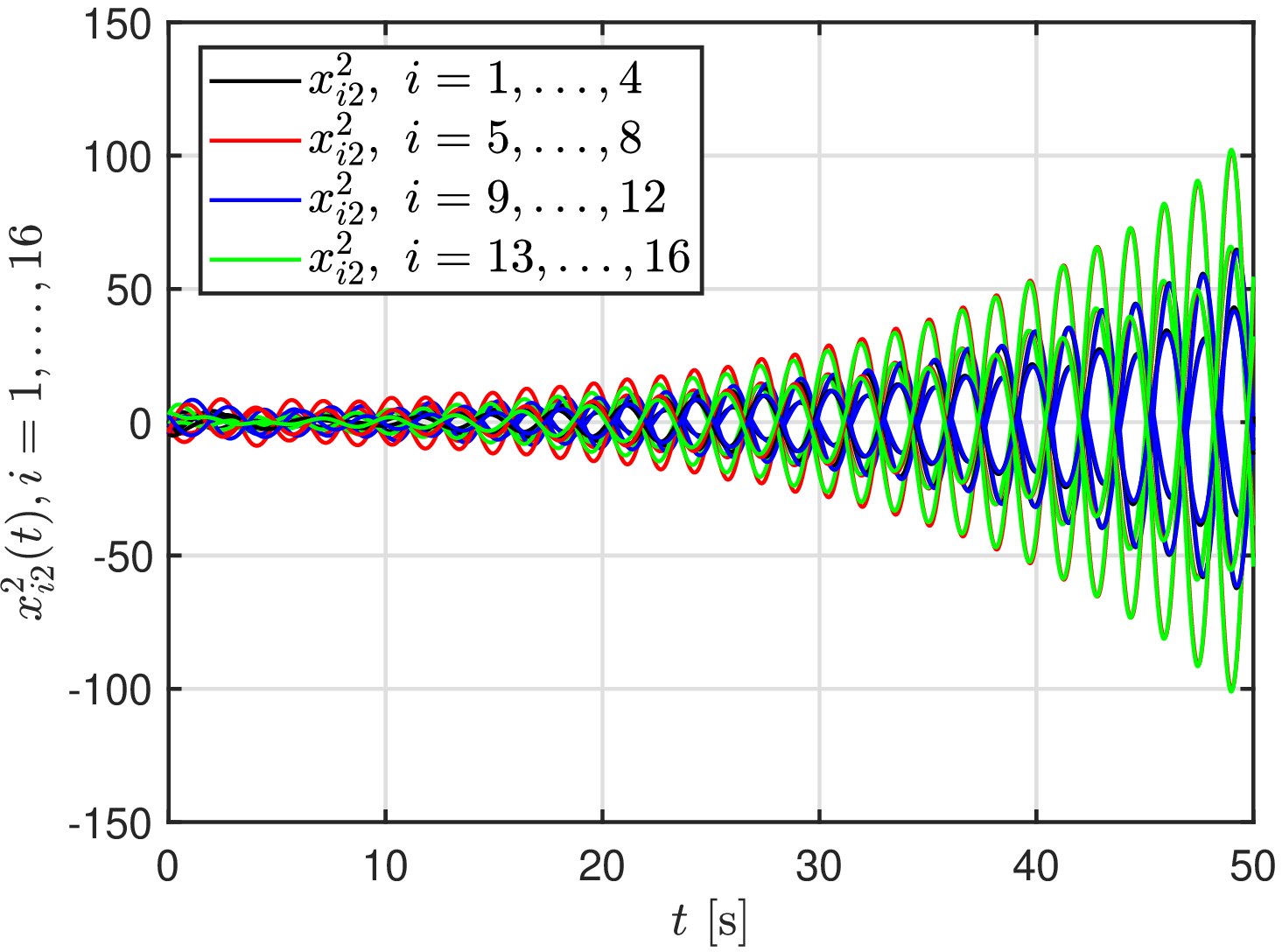}} \hfill\\
\subfloat[{$x_{i1}^1$ vs $t$ [s]}]{\includegraphics[width=0.24\linewidth]{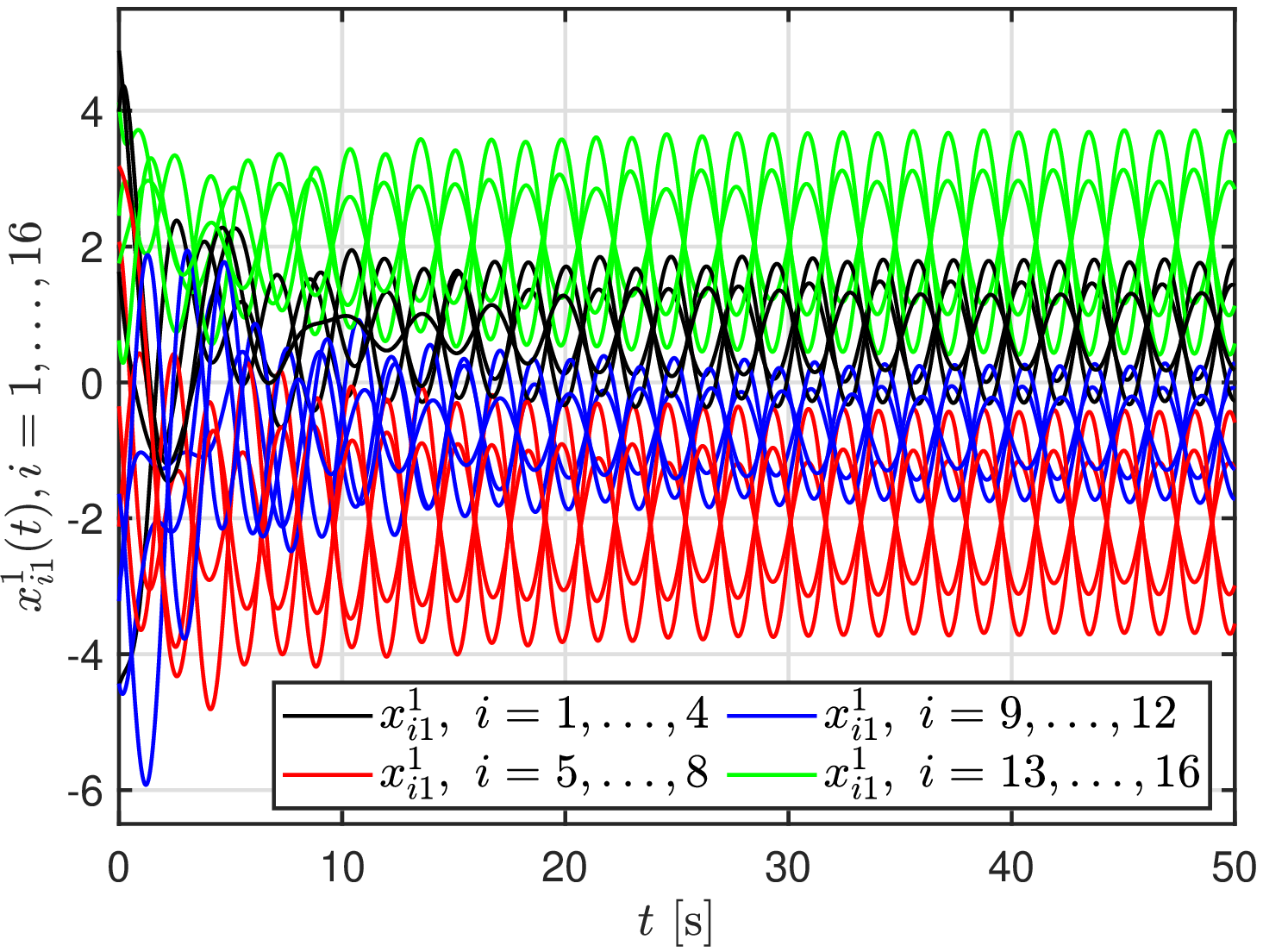}} \hfill
\subfloat[{$x_{i2}^1$ vs $t$ [s]}]{\includegraphics[width=0.24\linewidth]{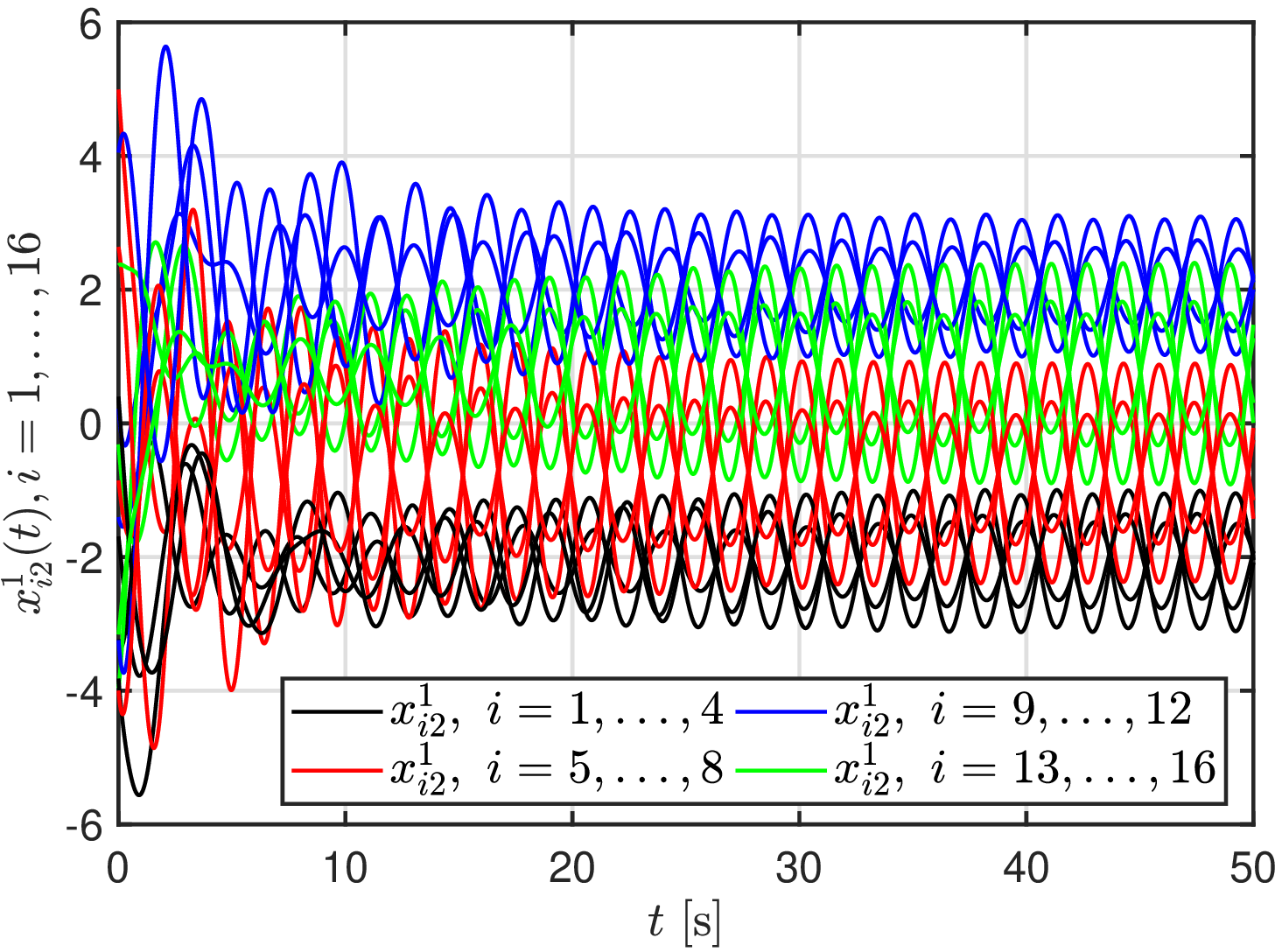}} \hfill
\subfloat[{$x_{i1}^2$ vs $t$ [s]}]{\includegraphics[width=0.24\linewidth]{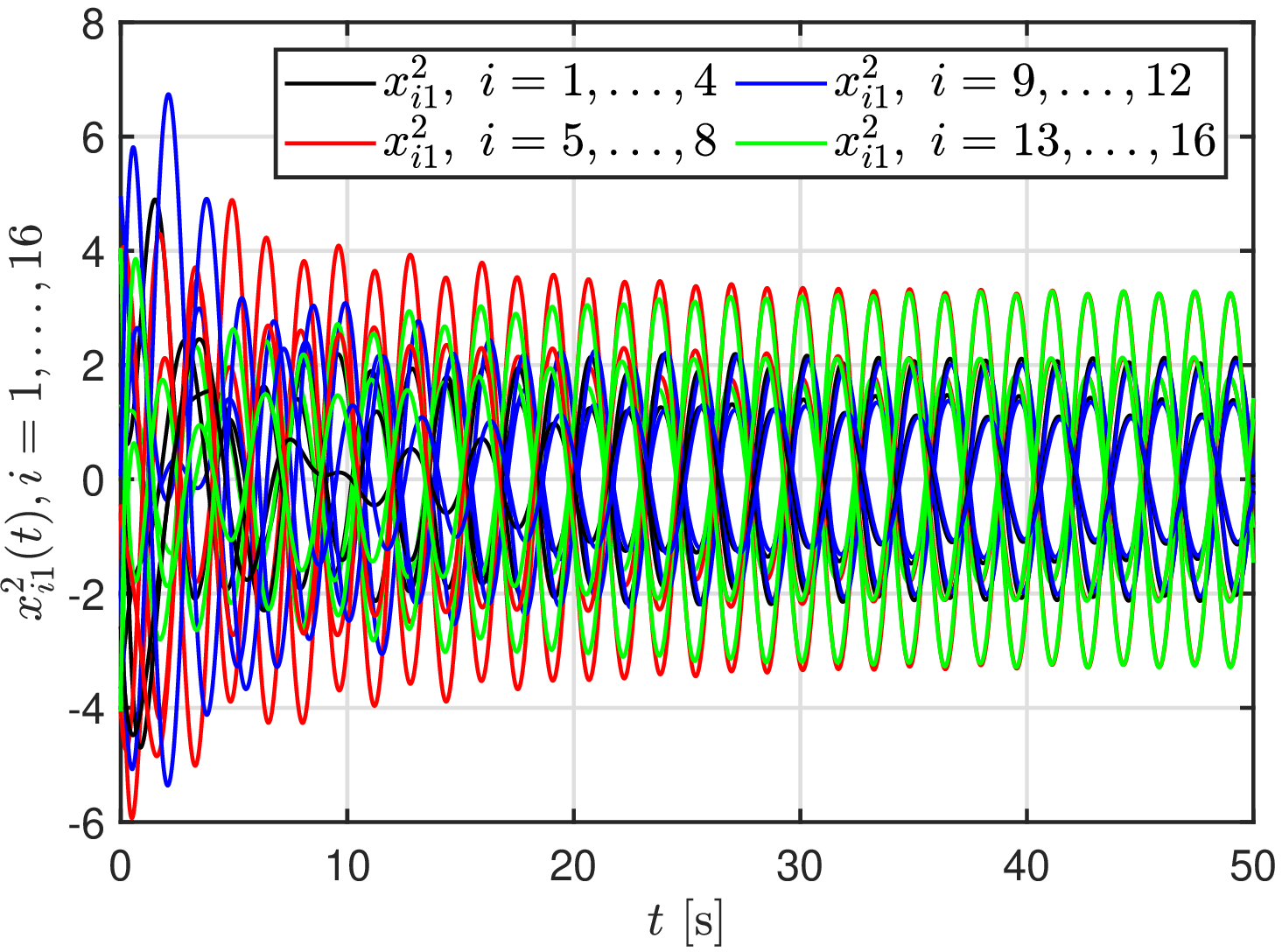}} \hfill
\subfloat[{$x_{i2}^2$ vs $t$ [s]}]{\includegraphics[width=0.24\linewidth]{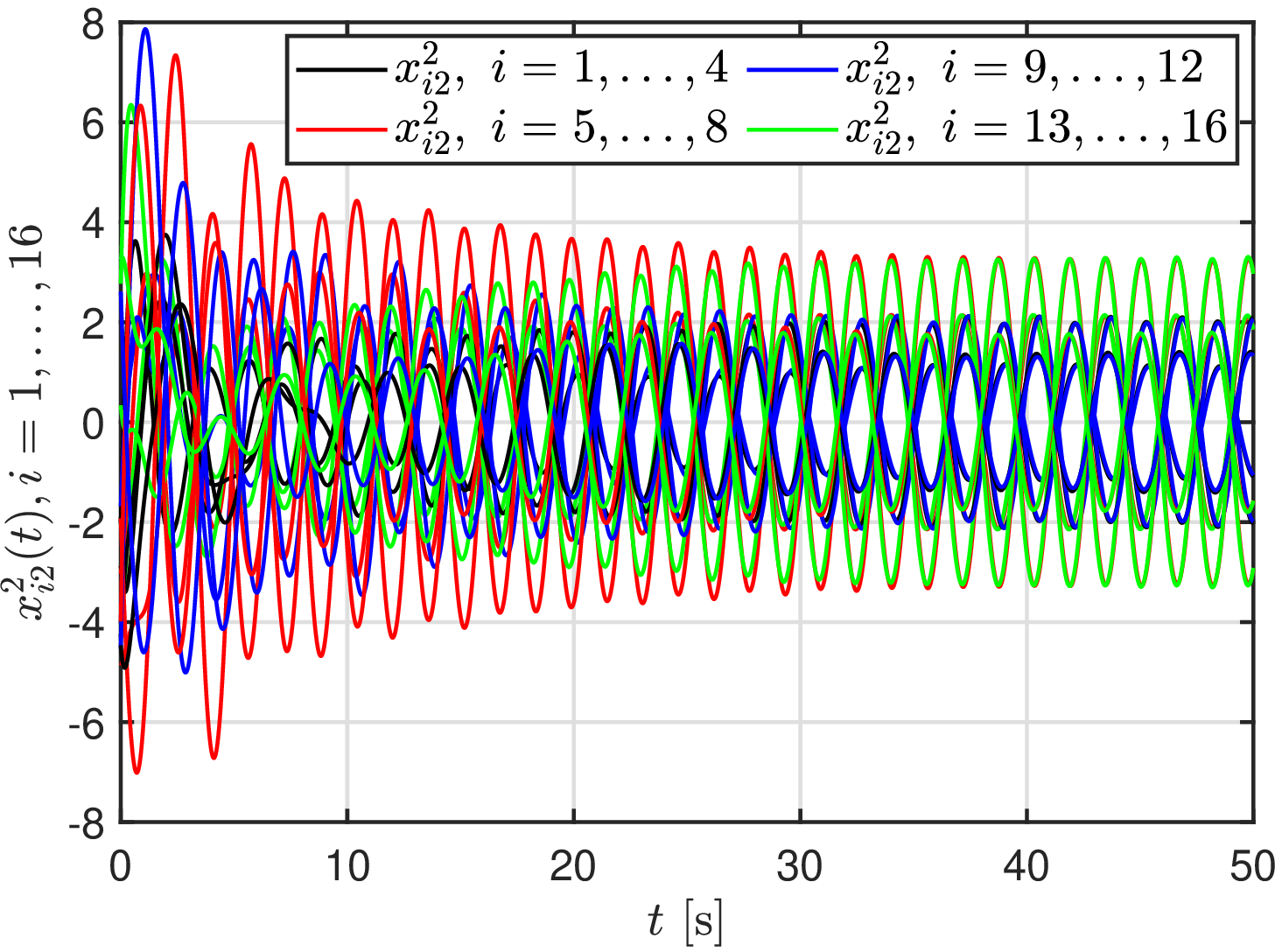}} \hfill\\
\subfloat[{$x_{i1}^1$ vs $t$ [s]}]{\includegraphics[width=0.24\linewidth]{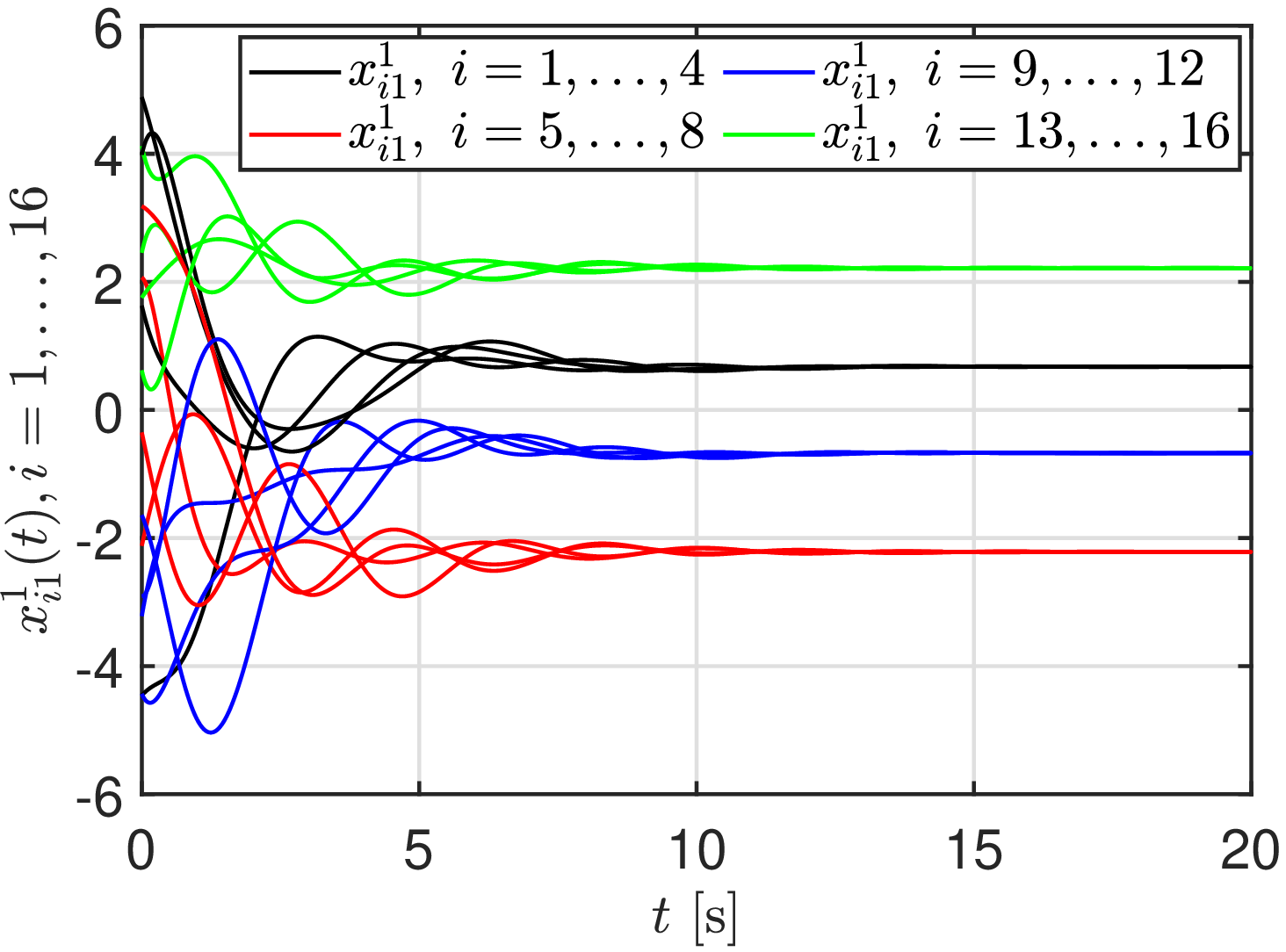}} \hfill
\subfloat[{$x_{i2}^1$ vs $t$ [s]}]{\includegraphics[width=0.24\linewidth]{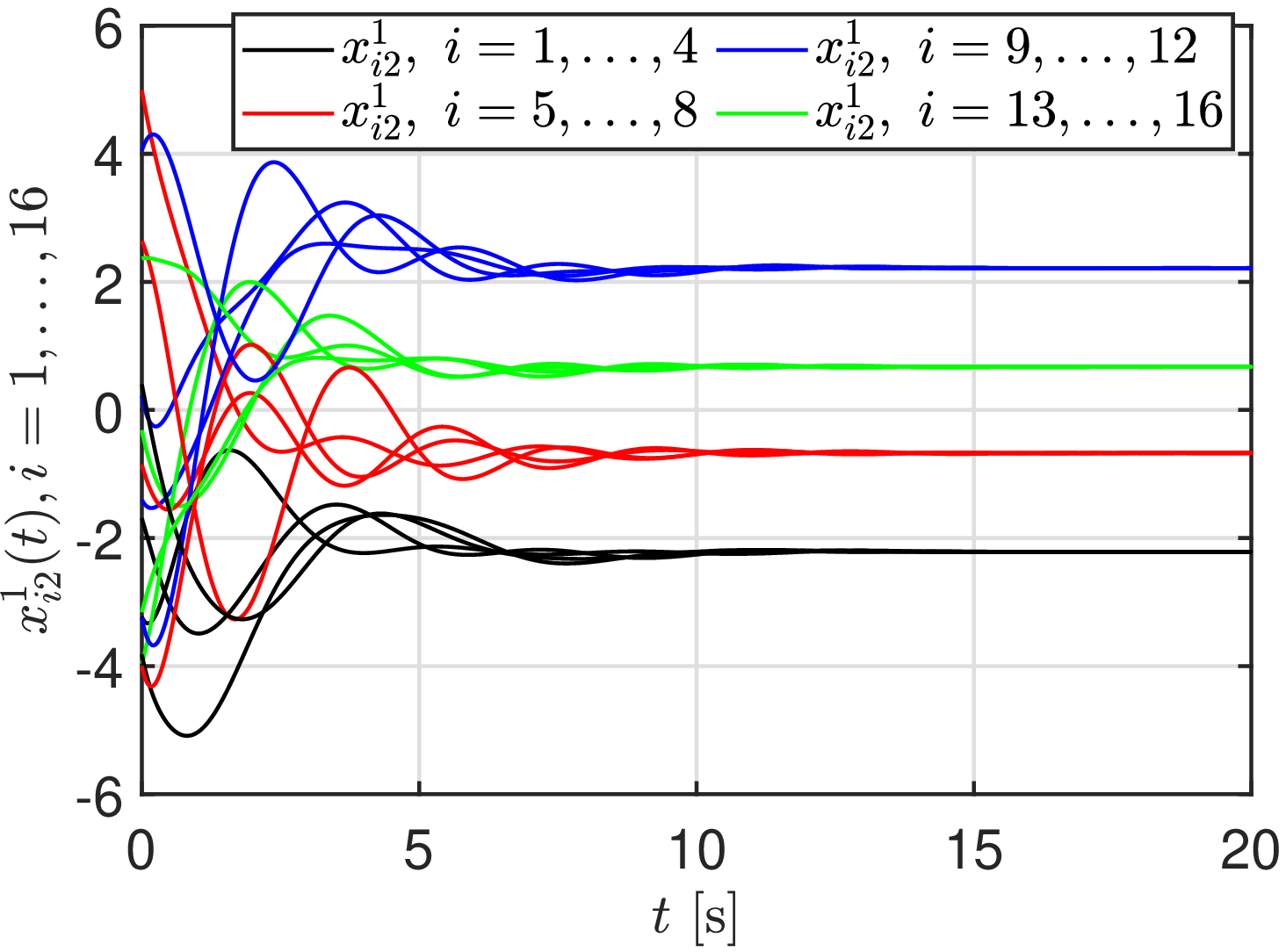}} \hfill
\subfloat[{$x_{i1}^2$ vs $t$ [s]}]{\includegraphics[width=0.24\linewidth]{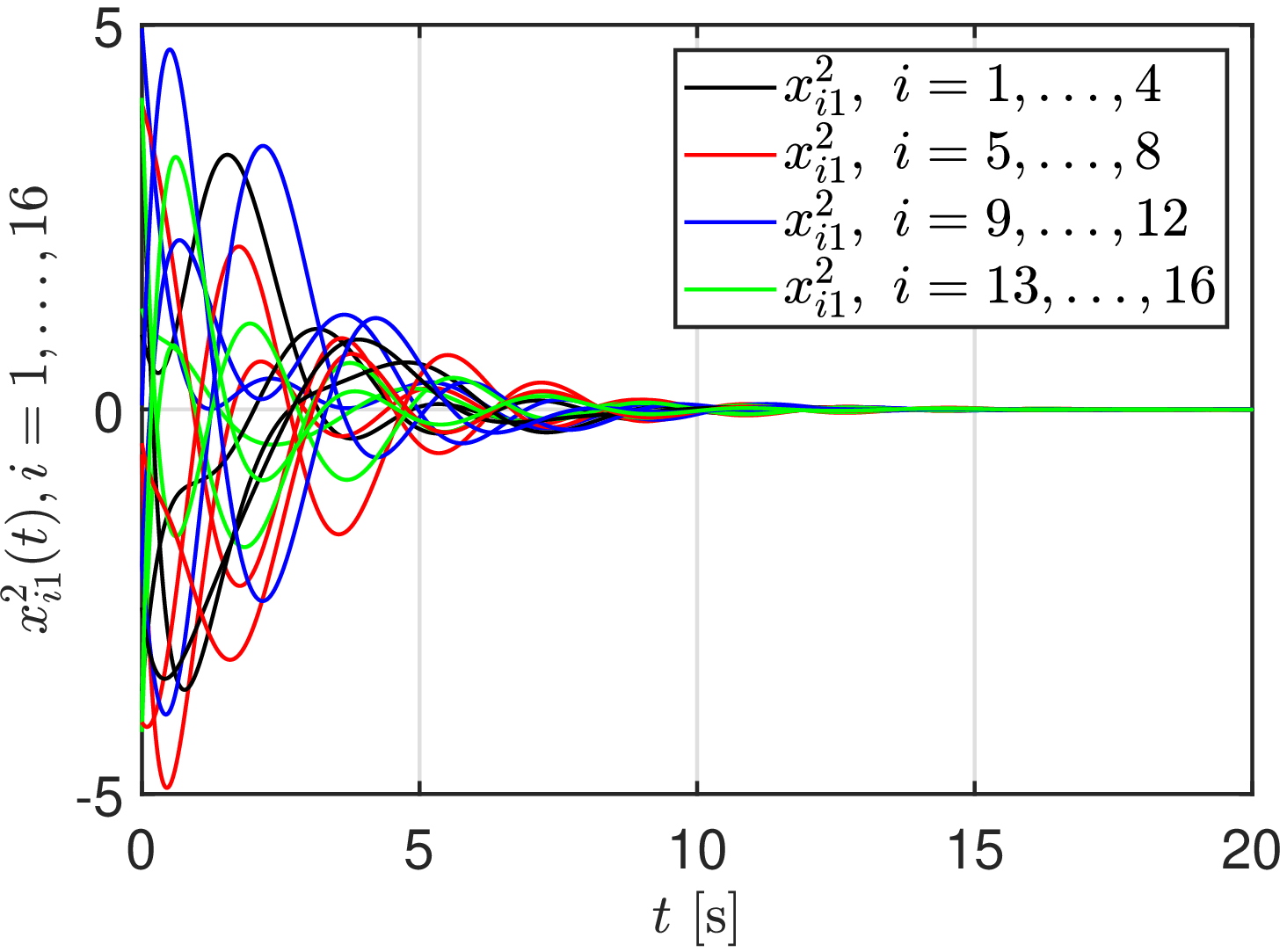}} \hfill
\subfloat[{$x_{i2}^2$ vs $t$ [s]}]{\includegraphics[width=0.24\linewidth]{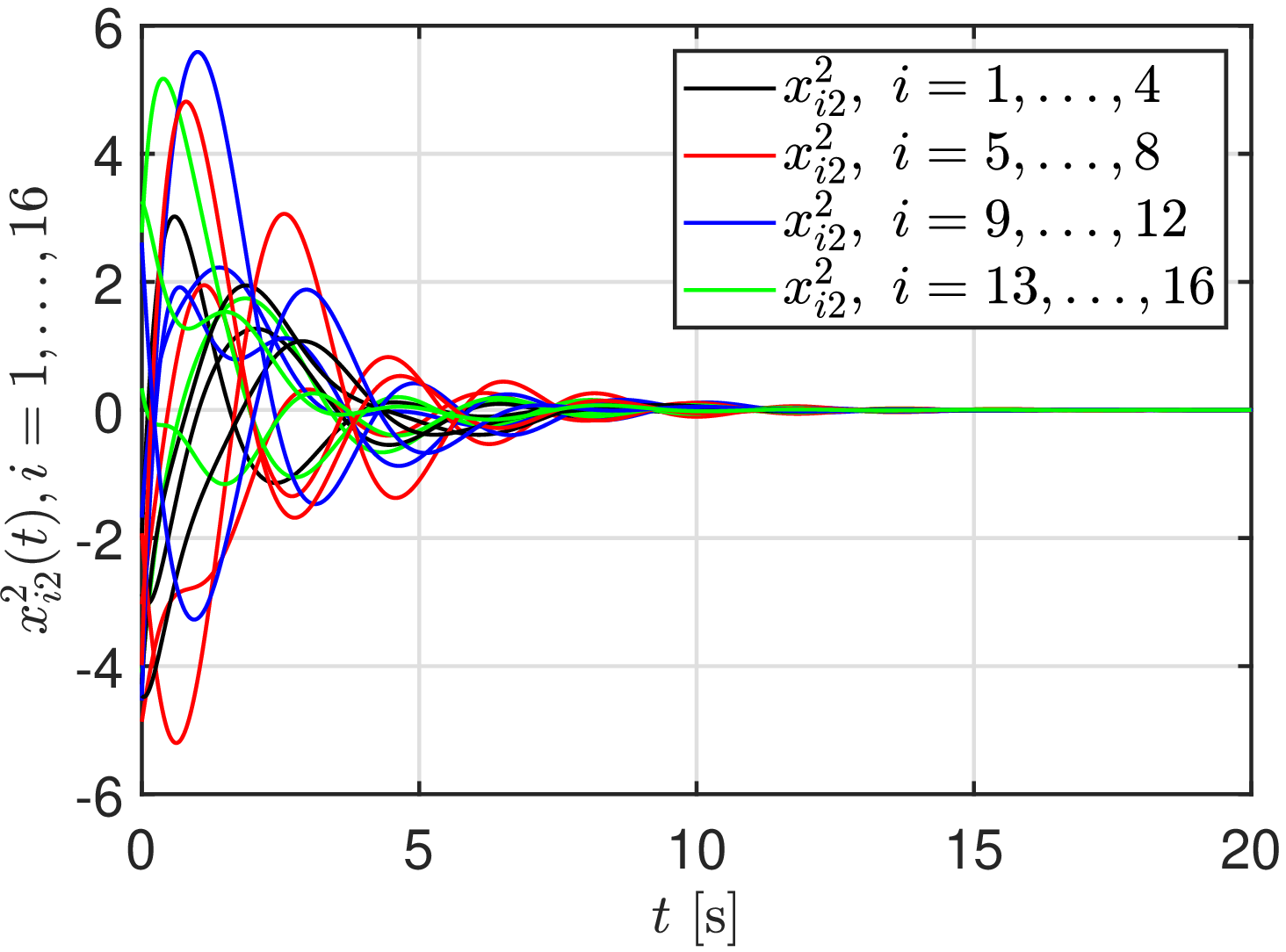}} \hfill
\caption{Simulations of double-integrator agents: (a)--(c): Trajectories of agents with $\alpha = 1.8$,  $1.9724$, and $3$, respectively; (d)--(g): $x_{i1}^k$ and $x_{i2}^k$, $k = 1, 2,$ vs time $t$ [s] corresponding to $\alpha = 1.8$; (h)--(k): $x_{i1}^k$ and $x_{i2}^k$, $k = 1, 2,$ vs time $t$ [s] corresponding to $\alpha = 1.9724$; (l)--(o): $x_{i1}^k$ and $x_{i2}^k$, $k = 1, 2,$ vs time $t$ [s] corresponding to $\alpha = 3$.}
\label{fig:4}
\end{figure*}
\section{Conclusions}
\label{sec:6}
This paper proposed a novel matrix-scaled consensus model, which can describe a multi-dimensional opinion dynamics system, where the heterogeneity in the individuals' private belief systems causes clustering phenomenon frequently. Extension to double-integrator agents were are considered. The matrix scaling gains allow agents keeping their own biased states (in both amplitude and direction) with regard to a virtual consensus point. For further studies, it will be interesting to combine the matrix-scaled consensus with the Altafini model, and study other applications such as scaled synchronization and formation control.
%\appendix
%\appendices

\bibliographystyle{IEEEtran}
\bibliography{minh2021}  

\end{document}